\newtheorem{theorem}{Theorem}
\theoremstyle{plain}
\newtheorem{corollary}{Corollary}
\newtheorem{definition}{Definition}
\newtheorem{lemma}{Lemma}
\newtheorem{proposition}{Proposition}
\numberwithin{equation}{section}
\begin{document}
\title[On $s$-Geometrically Convex Functions]{On Some New Hermite-Hadamard
type inequalities for $s$-geometrically convex functions}
\author{\.{I}mdat \.{I}\c{s}can}
\address{Department of Mathematics, Faculty of Arts and Sciences,\\
Giresun University, Giresun, Turkey.}
\email{imdati@yahoo.com, imdat.iscan@giresun.edu.tr}
\subjclass[2000]{Primary 26D15; Secondary 26A51}
\keywords{$s$-Geometrically convex, Hermite-Hadamard type inequalities}

\begin{abstract}
In this paper, some new integral inequalities of Hermite-Hadamard type
related to the $s$-geometrically convex functions are established and some
applications to special means of positive real numbers are also given.
\end{abstract}

\maketitle

\section{Introduction}

In this section, we firstly list several definitions and some known results.

\begin{definition}
Let $I$ be an interval in $%
\mathbb{R}
$. Then $f:I\rightarrow 
\mathbb{R}
$ is said to be convex if%
\begin{equation*}
f\left( \lambda x+(1-\lambda )y\right) \leq \lambda f(x)+(1-\lambda )f(y)
\end{equation*}%
for all $x,y\in I$ and $\lambda \in \lbrack 0,1]$.
\end{definition}

\begin{definition}
\cite{HM94}Let $s\in (0,1]$. A function $f:I\subset \left[ 0,\infty \right)
\rightarrow \left[ 0,\infty \right) $ is said to be $s$-convex in the second
sense if%
\begin{equation*}
f\left( \lambda x+(1-\lambda )y\right) \leq \lambda ^{s}f(x)+(1-\lambda
)^{s}f(y)
\end{equation*}%
for all $x,y\in I$ and $\lambda \in \lbrack 0,1]$.
\end{definition}

\begin{definition}[\protect\cite{ZJQ12}]
A function $f:I\subset 
\mathbb{R}
_{+}=\left( 0,\infty \right) \rightarrow 
\mathbb{R}
_{+}$ is said to be a geometrically convex function if%
\begin{equation*}
f\left( x^{\lambda }y^{1-\lambda }\right) \leq f(x)^{\lambda
}f(y)^{1-\lambda }
\end{equation*}%
for $x,y\in I$ and $\lambda \in \lbrack 0,1]$.
\end{definition}

\begin{definition}[\protect\cite{ZJQ12}]
A function $f:I\subset 
\mathbb{R}
_{+}=\left( 0,\infty \right) \rightarrow 
\mathbb{R}
_{+}$ is said to be a $s$-geometrically convex function if%
\begin{equation*}
f\left( x^{\lambda }y^{1-\lambda }\right) \leq f(x)^{\lambda
^{s}}f(y)^{\left( 1-\lambda \right) ^{s}}
\end{equation*}%
for some $s\in \left( 0,1\right] $, where $x,y\in I$ and $\lambda \in
\lbrack 0,1]$.
\end{definition}

Let $f:I\subseteq \mathbb{R\rightarrow R}$ be a convex function defined on
the interval $I$ of real numbers and $a,b\in I$ with $a<b$. The following
double inequality is well known in the literature as Hermite-Hadamard
integral inequality 
\begin{equation}
f\left( \frac{a+b}{2}\right) \leq \frac{1}{b-a}\dint\limits_{a}^{b}f(x)dx%
\leq \frac{f(a)+f(b)}{2}\text{.}  \label{1-1}
\end{equation}%
Recently, several integral inequalities connected with the inequalities (\ref%
{1-1}) for the $s$-convex functions have been established by many authors
for example see \cite{ADK11,HBI09,HM94,I13,KBO07}. In \cite{ZJQ12}, The
authors has established some integral inequalities connected with the
inequalities (\ref{1-1}) for the $s$-geometrically convex and monotonically
decreasing functions. In \cite{T12}, Tunc has established inequalities for $%
s $-geometrically and geometrically convex functions which are connected
with the famous Hermite Hadamard inequality holding for convex functions. In 
\cite{T12}, Tunc also has given the following result for geometrically
convex and monotonically decreasing functions:

\begin{corollary}
Let $f:I\subset 
\mathbb{R}
_{+}\rightarrow 
\mathbb{R}
_{+}$ be geometrically convex and monotonically decreasing on $\left[ a,b%
\right] $, then one has%
\begin{equation}
f^{2}\left( \sqrt{ab}\right) \leq \frac{1}{\ln b-\ln a}\dint\limits_{a}^{b}%
\frac{f(x)}{x}f\left( \frac{ab}{x}\right) dx\leq f(a)f(b).  \label{1-2}
\end{equation}
\end{corollary}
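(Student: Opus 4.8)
The plan is to reduce the stated double inequality to a single integral over $[0,1]$ by a logarithmic change of variables, and then to apply the defining inequality of geometric convexity twice: once at $\lambda = 1/2$ for the lower bound, and once (split into two complementary pieces) at a general $\lambda = t$ for the upper bound. Note that geometric convexity forces $s\mapsto \ln f(e^{s})$ to be convex, hence continuous, so the integrand is measurable and the integral in \eqref{1-2} is well defined.

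First I would substitute $x = a^{1-t}b^{t}$, so that $\ln x = (1-t)\ln a + t\ln b$ and hence $\tfrac{dx}{x} = (\ln b - \ln a)\,dt$, with $t$ running from $0$ to $1$ as $x$ runs from $a$ to $b$. The crucial algebraic identity is
\begin{equation*}
\frac{ab}{x} = \frac{ab}{a^{1-t}b^{t}} = a^{t}b^{1-t},
\end{equation*}
so that the normalized integral collapses to
\begin{equation*}
\frac{1}{\ln b-\ln a}\int_{a}^{b}\frac{f(x)}{x}f\!\left(\frac{ab}{x}\right)dx = \int_{0}^{1} f\!\left(a^{1-t}b^{t}\right) f\!\left(a^{t}b^{1-t}\right) dt.
\end{equation*}
Everything then hinges on estimating the integrand $g(t) := f(a^{1-t}b^{t})\,f(a^{t}b^{1-t})$ pointwise.

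For the lower bound I would observe that $\sqrt{ab}$ is exactly the geometric mean of the two arguments, since $\big(a^{1-t}b^{t}\big)\big(a^{t}b^{1-t}\big) = ab$. Writing $u = a^{1-t}b^{t}$ and $v = a^{t}b^{1-t}$ and applying geometric convexity with $\lambda = 1/2$ gives $f(\sqrt{ab}) = f(u^{1/2}v^{1/2}) \leq f(u)^{1/2}f(v)^{1/2}$, whence $f^{2}(\sqrt{ab}) \leq g(t)$ for every $t$; integrating over $[0,1]$ yields the left inequality. For the upper bound I would apply the definition of geometric convexity separately to each factor, namely $f(a^{1-t}b^{t}) \leq f(a)^{1-t}f(b)^{t}$ (with $\lambda = 1-t$) and $f(a^{t}b^{1-t}) \leq f(a)^{t}f(b)^{1-t}$ (with $\lambda = t$); multiplying these two bounds makes the exponents of $f(a)$ and of $f(b)$ each sum to $1$, so $g(t)\leq f(a)f(b)$ independently of $t$, and integration gives the right inequality.

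The only genuine care required is in the change of variables and in recognizing that $\sqrt{ab}$ is the geometric mean of $u$ and $v$; once these are in place both estimates follow at once from the definition. I would also remark that this direct argument never uses the monotone decrease of $f$—geometric convexity alone suffices for both bounds—so the monotonicity hypothesis seems to be inherited from the more general theorem from which this corollary is specialized rather than being needed here.
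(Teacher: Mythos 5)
Your proof is correct, but there is nothing in the paper to compare it against directly: inequality (\ref{1-2}) is quoted from Tun\c{c}'s paper \cite{T12}, and this paper only remarks, without argument, that the monotone-decrease hypothesis can be dropped and that the inequalities are sharp. Your argument is therefore genuinely different from anything the paper does, and it actually substantiates that unproved remark. The substitution $x=a^{1-t}b^{t}$, under which $ab/x=a^{t}b^{1-t}$ and $dx/x=(\ln b-\ln a)\,dt$, reduces the normalized integral to $\int_{0}^{1}f\left(a^{1-t}b^{t}\right)f\left(a^{t}b^{1-t}\right)dt$; the lower bound then follows pointwise from geometric convexity at $\lambda=1/2$ applied to the pair $u=a^{1-t}b^{t}$, $v=a^{t}b^{1-t}$ (whose product is $ab$), and the upper bound from multiplying the two estimates $f\left(a^{1-t}b^{t}\right)\leq f(a)^{1-t}f(b)^{t}$ and $f\left(a^{t}b^{1-t}\right)\leq f(a)^{t}f(b)^{1-t}$, whose exponents sum to one. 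Both steps use only the definition of geometric convexity and positivity of $f$; monotonicity is never invoked, exactly as the paper claims. By contrast, the machinery the paper itself develops (Lemma \ref{2.1}) represents the differences $f(a)f(b)-\frac{1}{\ln b-\ln a}\int_{a}^{b}\frac{f(x)}{x}f\left(\frac{ab}{x}\right)dx$ and $f^{2}\left(\sqrt{ab}\right)-\frac{1}{\ln b-\ln a}\int_{a}^{b}\frac{f(x)}{x}f\left(\frac{ab}{x}\right)dx$ as integrals against $f^{\prime}$; that route requires differentiability of $f$ and, as exploited in Theorems \ref{2.2} and \ref{2.3}, controls only absolute values of these differences, not their signs, so it could not replace your elementary pointwise argument. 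One small point of care: convexity of $s\mapsto\ln f(e^{s})$ gives continuity only on the open interval, but the integrand is bounded on all of $[a,b]$ by the very convexity estimates you prove, so integrability is indeed unproblematic.
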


Note that, the inequalities (\ref{1-2}) are also true without the condition
monotonically decreasing and the inequalities (\ref{1-2}) are sharp.

In this paper, the author give new identities for differentiable functions.
A consequence of the identities is that the author establish some new
inequalities connected with the inequalities (\ref{1-2}) for the $s$%
-geometrically convex functions.

\section{Main Results}

In order to prove our results, we need the following lemma:

\begin{lemma}
\label{2.1}Let $f:I\subseteq \mathbb{R}_{+}\mathbb{\rightarrow R}$ be
differentiable on $I^{\circ }$, and $a,b\in I$ with $a<b.$ If $f^{\prime
}\in L\left[ a,b\right] ,$ then%
\begin{eqnarray}
&&f(a)f(b)-\frac{1}{\ln b-\ln a}\dint\limits_{a}^{b}\frac{f(x)}{x}f\left( 
\frac{ab}{x}\right) dx  \notag \\
&=&\dint\limits_{0}^{1}\frac{b}{2}\ln \left( \frac{a}{b}\right) \left(
t-1\right) \left( \frac{a}{b}\right) ^{\frac{t}{2}}f\left( a^{1-t}\left(
ab\right) ^{\frac{t}{2}}\right) f^{\prime }\left( b^{1-t}\left( ab\right) ^{%
\frac{t}{2}}\right)  \notag \\
&&+\frac{a}{2}\ln \left( \frac{b}{a}\right) \left( t-1\right) \left( \frac{b%
}{a}\right) ^{\frac{t}{2}}f^{\prime }\left( a^{1-t}\left( ab\right) ^{\frac{t%
}{2}}\right) f\left( b^{1-t}\left( ab\right) ^{\frac{t}{2}}\right) dt,
\label{2-1}
\end{eqnarray}%
\begin{eqnarray}
&&f^{2}\left( \sqrt{ab}\right) -\frac{1}{\ln b-\ln a}\dint\limits_{a}^{b}%
\frac{f(x)}{x}f\left( \frac{ab}{x}\right) dx  \notag \\
&=&\dint\limits_{0}^{1}\frac{b}{2}\ln \left( \frac{a}{b}\right) t\left( 
\frac{a}{b}\right) ^{\frac{t}{2}}f\left( a^{1-t}\left( ab\right) ^{\frac{t}{2%
}}\right) f^{\prime }\left( b^{1-t}\left( ab\right) ^{\frac{t}{2}}\right) 
\notag \\
&&+\frac{a}{2}\ln \left( \frac{b}{a}\right) t\left( \frac{b}{a}\right) ^{%
\frac{t}{2}}f^{\prime }\left( a^{1-t}\left( ab\right) ^{\frac{t}{2}}\right)
f\left( b^{1-t}\left( ab\right) ^{\frac{t}{2}}\right) dt.  \label{2-2}
\end{eqnarray}
\end{lemma}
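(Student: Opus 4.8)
The plan is to recognize both right-hand sides as integrals of the form $\int_0^1 p(t)\,\frac{d}{dt}\bigl[f(u(t))f(v(t))\bigr]\,dt$ for a suitable linear weight $p$, to evaluate them by a single integration by parts, and then to reduce the leftover integral to the stated Hermite--Hadamard mean by a symmetry argument.

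First I would introduce the abbreviations $u(t)=a^{1-t}(ab)^{t/2}=a^{1-t/2}b^{t/2}$ and $v(t)=b^{1-t}(ab)^{t/2}=a^{t/2}b^{1-t/2}$, and compute the logarithmic derivatives $u'(t)=\tfrac12\ln(b/a)\,u(t)$ and $v'(t)=\tfrac12\ln(a/b)\,v(t)$. The product rule then gives
\[
\frac{d}{dt}\bigl[f(u)f(v)\bigr]=\tfrac12\ln(b/a)\,u\,f'(u)f(v)+\tfrac12\ln(a/b)\,v\,f(u)f'(v).
\]
Rewriting the coefficients via $u=a(b/a)^{t/2}$ and $v=b(a/b)^{t/2}$ as $\tfrac{a}{2}\ln(b/a)(b/a)^{t/2}$ and $\tfrac{b}{2}\ln(a/b)(a/b)^{t/2}$, I would observe that the integrand on the right of (\ref{2-1}) is exactly $(t-1)\frac{d}{dt}[f(u)f(v)]$ and that of (\ref{2-2}) is exactly $t\,\frac{d}{dt}[f(u)f(v)]$.

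Next I would integrate by parts. For (\ref{2-1}), $\int_0^1(t-1)\frac{d}{dt}[f(u)f(v)]\,dt=\bigl[(t-1)f(u)f(v)\bigr]_0^1-\int_0^1 f(u(t))f(v(t))\,dt$, and the boundary term collapses to $f(u(0))f(v(0))=f(a)f(b)$. For (\ref{2-2}) the same computation with weight $t$ produces the boundary value $f(u(1))f(v(1))=f^2(\sqrt{ab})$, since $u(1)=v(1)=\sqrt{ab}$. In both cases what remains is $-\int_0^1 f(u(t))f(v(t))\,dt$.

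The final and most delicate step is to identify $\int_0^1 f(u(t))f(v(t))\,dt$ with the integral mean $\frac{1}{\ln b-\ln a}\int_a^b\frac{f(x)}{x}f\!\left(\frac{ab}{x}\right)dx$. Substituting $x=u(t)=a^{1-t/2}b^{t/2}$ gives $\frac{dx}{x}=\tfrac12\ln(b/a)\,dt$ and $ab/x=v(t)$, but $t$ ranges only over $[0,1]$, so $x$ runs merely from $a$ to $\sqrt{ab}$; this yields $\frac{2}{\ln b-\ln a}\int_a^{\sqrt{ab}}\frac{f(x)}{x}f(ab/x)\,dx$. The missing factor is supplied by the involution $x\mapsto ab/x$, which maps $[a,\sqrt{ab}]$ onto $[\sqrt{ab},b]$, sends $\tfrac{dx}{x}$ to $-\tfrac{dx}{x}$, and interchanges $f(x)$ with $f(ab/x)$; hence $\int_a^{\sqrt{ab}}\frac{f(x)}{x}f(ab/x)\,dx=\int_{\sqrt{ab}}^{b}\frac{f(x)}{x}f(ab/x)\,dx$, so the half-range integral doubles into the full one. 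I expect the main obstacle to be exactly this bookkeeping of the factor of two --- tracking the $t/2$ exponents, the limits $a\to\sqrt{ab}$, and the reflection $x\mapsto ab/x$ --- rather than any conceptual difficulty; once the symmetry is in place, both identities drop out from the single integration by parts.
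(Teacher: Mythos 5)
Your proposal is correct and follows essentially the same route as the paper's own proof: both identify the integrands as $(t-1)\,\frac{d}{dt}\bigl[f(u(t))f(v(t))\bigr]$ and $t\,\frac{d}{dt}\bigl[f(u(t))f(v(t))\bigr]$, integrate by parts to produce the boundary values $f(a)f(b)$ and $f^{2}(\sqrt{ab})$, substitute $x=u(t)$ to get the half-range integral $\frac{2}{\ln b-\ln a}\int_{a}^{\sqrt{ab}}\frac{f(x)}{x}f\left(\frac{ab}{x}\right)dx$, and double it using the involution $x\mapsto ab/x$. There are no gaps.
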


\begin{proof}
Integrating by part and changing variables of integration yields%
\begin{eqnarray*}
&&\dint\limits_{0}^{1}\frac{b}{2}\ln \left( \frac{a}{b}\right) \left(
t-1\right) \left( \frac{a}{b}\right) ^{\frac{t}{2}}f\left( a^{1-t}\left(
ab\right) ^{\frac{t}{2}}\right) f^{\prime }\left( b^{1-t}\left( ab\right) ^{%
\frac{t}{2}}\right) \\
&&+\frac{a}{2}\ln \left( \frac{b}{a}\right) \left( t-1\right) \left( \frac{b%
}{a}\right) ^{\frac{t}{2}}f^{\prime }\left( a^{1-t}\left( ab\right) ^{\frac{t%
}{2}}\right) f\left( b^{1-t}\left( ab\right) ^{\frac{t}{2}}\right) dt \\
&=&\dint\limits_{0}^{1}\left( t-1\right) \left[ f\left( a^{1-t}\left(
ab\right) ^{\frac{t}{2}}\right) f\left( b^{1-t}\left( ab\right) ^{\frac{t}{2}%
}\right) \right] ^{\prime }dt \\
&=&\left. \left( t-1\right) f\left( a^{1-t}\left( ab\right) ^{\frac{t}{2}%
}\right) f\left( b^{1-t}\left( ab\right) ^{\frac{t}{2}}\right) \right\vert
_{0}^{1}-\dint\limits_{0}^{1}f\left( a^{1-t}\left( ab\right) ^{\frac{t}{2}%
}\right) f\left( b^{1-t}\left( ab\right) ^{\frac{t}{2}}\right) dt \\
&=&f(a)f(b)-\frac{2}{\ln b-\ln a}\dint\limits_{a}^{\sqrt{ab}}\frac{f(x)}{x}%
f\left( \frac{ab}{x}\right) dx.
\end{eqnarray*}%
By the following equality, we obtain the inequality (\ref{2-1})%
\begin{equation*}
\dint\limits_{a}^{\sqrt{ab}}\frac{f(x)}{x}f\left( \frac{ab}{x}\right)
dx=\dint\limits_{\sqrt{ab}}^{b}\frac{f(x)}{x}f\left( \frac{ab}{x}\right) dx.
\end{equation*}%
\begin{eqnarray*}
&&\dint\limits_{0}^{1}\frac{b}{2}\ln \left( \frac{a}{b}\right) t\left( \frac{%
a}{b}\right) ^{\frac{t}{2}}f\left( a^{1-t}\left( ab\right) ^{\frac{t}{2}%
}\right) f^{\prime }\left( b^{1-t}\left( ab\right) ^{\frac{t}{2}}\right) \\
&&+\frac{a}{2}\ln \left( \frac{b}{a}\right) t\left( \frac{b}{a}\right) ^{%
\frac{t}{2}}f^{\prime }\left( a^{1-t}\left( ab\right) ^{\frac{t}{2}}\right)
f\left( b^{1-t}\left( ab\right) ^{\frac{t}{2}}\right) dt \\
&=&\dint\limits_{0}^{1}t\left[ f\left( a^{1-t}\left( ab\right) ^{\frac{t}{2}%
}\right) f\left( b^{1-t}\left( ab\right) ^{\frac{t}{2}}\right) \right]
^{\prime }dt \\
&=&\left. tf\left( a^{1-t}\left( ab\right) ^{\frac{t}{2}}\right) f\left(
b^{1-t}\left( ab\right) ^{\frac{t}{2}}\right) \right\vert
_{0}^{1}-\dint\limits_{0}^{1}f\left( a^{1-t}\left( ab\right) ^{\frac{t}{2}%
}\right) f\left( b^{1-t}\left( ab\right) ^{\frac{t}{2}}\right) dt \\
&=&f^{2}\left( \sqrt{ab}\right) -\frac{2}{\ln b-\ln a}\dint\limits_{a}^{%
\sqrt{ab}}\frac{f(x)}{x}f\left( \frac{ab}{x}\right) dx \\
&=&f^{2}\left( \sqrt{ab}\right) -\frac{1}{\ln b-\ln a}\dint\limits_{a}^{b}%
\frac{f(x)}{x}f\left( \frac{ab}{x}\right) dx.
\end{eqnarray*}%
This completes the proof of Lemma \ref{2.1}.
\end{proof}

\begin{theorem}
\label{2.2}Let $f:I\subseteq \mathbb{R}_{+}\mathbb{\rightarrow R}_{+}$ be
differentiable on $I^{\circ }$, and $a,b\in I^{\circ }$ with $a<b$ and $%
f^{\prime }\in L\left[ a,b\right] .$ If $\left\vert f^{\prime }\right\vert
^{q}$ is $s$-geometrically convex on $\left[ a,b\right] $ for $q\geq 1$ and $%
s\in \left( 0,1\right] ,$ then%
\begin{equation}
\left\vert f(a)f(b)-\frac{1}{\ln b-\ln a}\dint\limits_{a}^{b}\frac{f(x)}{x}%
f\left( \frac{ab}{x}\right) dx\right\vert \leq \ln \left( \frac{b}{a}\right)
\left( \frac{1}{2}\right) ^{2-\frac{1}{q}}H_{1}\left( s,q;h_{1}(\theta
),h_{1}(\vartheta )\right) ,  \label{2-3}
\end{equation}%
\begin{equation}
\left\vert f^{2}\left( \sqrt{ab}\right) -\frac{1}{\ln b-\ln a}%
\dint\limits_{a}^{b}\frac{f(x)}{x}f\left( \frac{ab}{x}\right) dx\right\vert
\leq \ln \left( \frac{b}{a}\right) \left( \frac{1}{2}\right) ^{2-\frac{1}{q}%
}H_{2}\left( s,q;h_{2}(\theta ),h_{2}(\vartheta )\right) ,  \label{2-4}
\end{equation}%
where $M_{1}=\underset{x\in \left[ a,\sqrt{ab}\right] }{\max \left\vert
f(x)\right\vert },\ M_{2}=\underset{x\in \left[ \sqrt{ab},b\right] }{\max
\left\vert f(x)\right\vert },$%
\begin{equation}
h_{1}(u)=\left\{ 
\begin{array}{c}
\frac{1}{2},\ \ \ \ \ \ \ \ \ \ \ u=1 \\ 
\frac{u-\ln u-1}{\left( \ln u\right) ^{2}},\ u\neq 1%
\end{array}%
\right. ,\ h_{2}(u)=\left\{ 
\begin{array}{c}
\frac{1}{2},\ \ \ \ \ \ \ \ \ \ \ u=1 \\ 
\frac{u\ln u-u+1}{\left( \ln u\right) ^{2}},\ u\neq 1%
\end{array}%
\right. ,\ u>0  \label{2-5}
\end{equation}%
\begin{equation}
\theta =\left( \frac{a\left\vert f^{\prime }(a)\right\vert ^{s}}{b\left\vert
f^{\prime }(b)\right\vert ^{s}}\right) ^{q/2},\ \vartheta =\left( \frac{%
b\left\vert f^{\prime }(b)\right\vert ^{s}}{a\left\vert f^{\prime
}(a)\right\vert ^{s}}\right) ^{q/2},\   \label{2-6}
\end{equation}%
\begin{eqnarray}
&&H_{i}\left( s,q;h_{i}(\theta ),h_{i}(\vartheta )\right)  \label{2-7} \\
&=&\left\{ 
\begin{array}{c}
b\left\vert f^{\prime }(b)\right\vert ^{s}M_{1}h_{i}^{1/q}\left( \theta
\right) +a\left\vert f^{\prime }(a)\right\vert ^{s}M_{2}h_{i}^{1/q}\left(
\vartheta \right) , \\ 
\ \left\vert f^{\prime }(a)\right\vert ,\ \left\vert f^{\prime
}(b)\right\vert \leq 1, \\ 
b\left\vert f^{\prime }\left( b\right) \right\vert \left\vert f^{\prime
}\left( a\right) \right\vert ^{1-s}M_{1}h_{i}^{1/q}\left( \theta \right)
+a\left\vert f^{\prime }\left( a\right) \right\vert \left\vert f^{\prime
}\left( b\right) \right\vert ^{1-s}M_{2}h_{i}^{1/q}\left( \vartheta \right) ,
\\ 
\ \left\vert f^{\prime }(a)\right\vert ,\ \left\vert f^{\prime
}(b)\right\vert \geq 1, \\ 
b\left\vert f^{\prime }\left( b\right) \right\vert M_{1}h_{i}^{1/q}\left(
\theta \right) +a\left\vert f^{\prime }\left( a\right) \right\vert
^{s}\left\vert f^{\prime }\left( b\right) \right\vert
^{1-s}M_{2}h_{i}^{1/q}\left( \vartheta \right) , \\ 
\ \left\vert f^{\prime }(a)\right\vert \leq 1\leq \ \left\vert f^{\prime
}(b)\right\vert , \\ 
b\left\vert f^{\prime }\left( b\right) \right\vert ^{s}\left\vert f^{\prime
}\left( a\right) \right\vert ^{1-s}M_{1}h_{i}^{1/q}\left( \theta \right)
+a\left\vert f^{\prime }(a)\right\vert M_{2}h_{i}^{1/q}\left( \vartheta
\right) , \\ 
\ \left\vert f^{\prime }(b)\right\vert \leq 1\leq \ \left\vert f^{\prime
}(a)\right\vert .%
\end{array}%
\right. ,i=1,2  \notag
\end{eqnarray}
\end{theorem}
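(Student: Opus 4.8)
The plan is to establish \eqref{2-3} directly from identity \eqref{2-1}, and then obtain \eqref{2-4} from \eqref{2-2} by the identical argument with the weight $t$ in place of $1-t$ (which is precisely why $h_2$ replaces $h_1$). First I would take absolute values on both sides of \eqref{2-1} and apply the triangle inequality to the two integral terms. Since $a<b$, one has $\ln(a/b)(t-1)=\ln(b/a)(1-t)\ge 0$ on $[0,1]$, so each coefficient is nonnegative and the moduli pass cleanly onto the factors $f(\cdot)$ and $f'(\cdot)$.

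Next I would locate the two families of arguments. Writing $a^{1-t}(ab)^{t/2}=a^{1-t/2}b^{t/2}$ and $b^{1-t}(ab)^{t/2}=a^{t/2}b^{1-t/2}$, the first runs monotonically over $[a,\sqrt{ab}]$ and the second over $[\sqrt{ab},b]$ as $t$ ranges in $[0,1]$. Hence $|f(a^{1-t/2}b^{t/2})|\le M_1$ and $|f(a^{t/2}b^{1-t/2})|\le M_2$, and these bounds are pulled out of the two respective integrals. To the remaining integrals I would apply the power-mean inequality with weight $1-t$: for $q\ge 1$,
\[
\int_0^1(1-t)G(t)\,dt\le\Big(\int_0^1(1-t)\,dt\Big)^{1-1/q}\Big(\int_0^1(1-t)G(t)^q\,dt\Big)^{1/q}.
\]
Because $\int_0^1(1-t)\,dt=1/2$, the factor $(1/2)^{1-1/q}$ combines with the $\tfrac b2,\tfrac a2$ out front to produce exactly the constant $\ln(b/a)(1/2)^{2-1/q}$.

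The heart of the argument is the inner $q$-th power integrals. Writing each argument of $f'$ as $a^\lambda b^{1-\lambda}$ (with $\lambda=t/2$ in the first integral and $\lambda=1-t/2$ in the second), the $s$-geometric convexity of $|f'|^q$ gives bounds of the shape $(|f'(a)|^q)^{\lambda^s}(|f'(b)|^q)^{(1-\lambda)^s}$. The exponents $\lambda^s$ are then linearized by the two elementary inequalities, valid for $\lambda\in[0,1]$ and $s\in(0,1]$,
\[
\lambda^s\ge s\lambda\quad\text{and}\quad \lambda^s\le 1-s(1-\lambda),
\]
using the first when the relevant base $|f'|^q$ is $\le 1$ (so replacing $\lambda^s$ by the smaller $s\lambda$ raises the value) and the second, a Bernoulli-type bound, when the base is $\ge 1$. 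The four sign patterns of $|f'(a)|,|f'(b)|$ relative to $1$ produce precisely the four branches of $H_i$. After linearization each integrand collapses to $C^q\,u^t$ with $u=\theta$ (first integral) or $u=\vartheta$ (second), and the identities $\int_0^1(1-t)u^t\,dt=h_1(u)$, $\int_0^1 t\,u^t\,dt=h_2(u)$ finish the evaluation, where the leftover constant $C$ is the prefactor ($|f'(b)|^s$, $|f'(b)||f'(a)|^{1-s}$, and so on) recorded in \eqref{2-7}.

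The main obstacle is bookkeeping rather than depth: one must verify the two linearization inequalities, and then check in all four cases that the chosen exponents on $|f'(a)|,|f'(b)|$ combine with $(a/b)^{qt/2}$ (resp. $(b/a)^{qt/2}$) and with the definition of $\theta,\vartheta$ in \eqref{2-6} so that the integral reduces exactly to $h_i(\theta)$ or $h_i(\vartheta)$. Care is also needed because $\theta,\vartheta$ carry $|f'|^s$ rather than $|f'|$ inside, which is exactly what forces the precise linear exponents $st/2$ and $s(1-t/2)$ demanded above.
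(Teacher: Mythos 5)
Your proposal is correct and follows essentially the same route as the paper's own proof: take absolute values in the identity of Lemma \ref{2.1}, bound $|f|$ by $M_{1},M_{2}$ on the two geometric subintervals, apply the power-mean inequality with weights $1-t$ (for \eqref{2-3}) and $t$ (for \eqref{2-4}), invoke $s$-geometric convexity of $|f^{\prime}|^{q}$, linearize the exponents via $\mu^{t^{s}}\leq \mu^{ts}$ for $0<\mu\leq 1$ and $\eta^{t^{s}}\leq \eta^{ts+1-s}$ for $\eta\geq 1$ (exactly your two inequalities, which are the paper's \eqref{2-a}), split into the four cases giving the branches of $H_{i}$, and evaluate the resulting integrals $\int_{0}^{1}(1-t)u^{t}dt=h_{1}(u)$ and $\int_{0}^{1}tu^{t}dt=h_{2}(u)$ at $u=\theta,\vartheta$. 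No gaps beyond the routine case-checking you yourself flag.
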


\bigskip

\begin{proof}
(1) Let $M_{1}=\underset{x\in \left[ a,\sqrt{ab}\right] }{\max \left\vert
f(x)\right\vert },\ M_{2}=\underset{x\in \left[ \sqrt{ab},b\right] }{\max
\left\vert f(x)\right\vert }.$ Since $\left\vert f^{\prime }\right\vert ^{q}$
is $s$-geometrically convex on $\left[ a,b\right] $, from lemma \ref{2.1}
and power mean inequality, we have%
\begin{eqnarray*}
&&\left\vert f(a)f(b)-\frac{1}{\ln b-\ln a}\dint\limits_{a}^{b}\frac{f(x)}{x}%
f\left( \frac{ab}{x}\right) dx\right\vert \\
&\leq &\dint\limits_{0}^{1}\frac{b}{2}\left\vert \ln \left( \frac{a}{b}%
\right) \right\vert \left\vert t-1\right\vert \left( \frac{a}{b}\right) ^{%
\frac{t}{2}}\left\vert f\left( a^{1-t}\left( ab\right) ^{\frac{t}{2}}\right)
\right\vert \left\vert f^{\prime }\left( b^{1-t}\left( ab\right) ^{\frac{t}{2%
}}\right) \right\vert \\
&&+\frac{a}{2}\ln \left( \frac{b}{a}\right) \left\vert t-1\right\vert \left( 
\frac{b}{a}\right) ^{\frac{t}{2}}\left\vert f^{\prime }\left( a^{1-t}\left(
ab\right) ^{\frac{t}{2}}\right) \right\vert \left\vert f\left( b^{1-t}\left(
ab\right) ^{\frac{t}{2}}\right) \right\vert dt
\end{eqnarray*}%
\begin{eqnarray*}
&\leq &\frac{b}{2}\left\vert \ln \left( \frac{a}{b}\right) \right\vert
M_{1}\dint\limits_{0}^{1}\left( 1-t\right) \left( \frac{a}{b}\right) ^{\frac{%
t}{2}}\left\vert f^{\prime }\left( b^{1-t}\left( ab\right) ^{\frac{t}{2}%
}\right) \right\vert dt \\
&&+\frac{a}{2}\ln \left( \frac{b}{a}\right) M_{2}\dint\limits_{0}^{1}\left(
1-t\right) \left( \frac{b}{a}\right) ^{\frac{t}{2}}\left\vert f^{\prime
}\left( a^{1-t}\left( ab\right) ^{\frac{t}{2}}\right) \right\vert dt
\end{eqnarray*}%
\begin{eqnarray*}
&\leq &\frac{b}{2}\left\vert \ln \left( \frac{a}{b}\right) \right\vert
M_{1}\left( \dint\limits_{0}^{1}\left( 1-t\right) dt\right) ^{1-\frac{1}{q}%
}\left( \dint\limits_{0}^{1}\left( 1-t\right) \left( \frac{a}{b}\right) ^{%
\frac{qt}{2}}\left\vert f^{\prime }\left( b^{1-t}\left( ab\right) ^{\frac{t}{%
2}}\right) \right\vert ^{q}dt\right) ^{\frac{1}{q}} \\
&&+\frac{a}{2}\ln \left( \frac{b}{a}\right) M_{2}\left(
\dint\limits_{0}^{1}\left( 1-t\right) dt\right) ^{1-\frac{1}{q}}\left(
\dint\limits_{0}^{1}\left( 1-t\right) \left( \frac{b}{a}\right) ^{\frac{qt}{2%
}}\left\vert f^{\prime }\left( a^{1-t}\left( ab\right) ^{\frac{t}{2}}\right)
\right\vert ^{q}dt\right) ^{\frac{1}{q}}
\end{eqnarray*}%
\begin{eqnarray}
&\leq &\frac{b\ln \left( \frac{b}{a}\right) M_{1}}{2}\left( \frac{1}{2}%
\right) ^{1-\frac{1}{q}}\left( \dint\limits_{0}^{1}\left( 1-t\right) \left( 
\frac{a}{b}\right) ^{\frac{qt}{2}}\left\vert f^{\prime }\left( a\right)
\right\vert ^{q\left( t/2\right) ^{s}}\left\vert f^{\prime }\left( b\right)
\right\vert ^{q\left( \left( 2-t\right) /2\right) ^{s}}dt\right) ^{\frac{1}{q%
}}  \notag \\
&&+\frac{a\ln \left( \frac{b}{a}\right) M_{2}}{2}\left( \frac{1}{2}\right)
^{1-\frac{1}{q}}\left( \dint\limits_{0}^{1}\left( 1-t\right) \left( \frac{b}{%
a}\right) ^{\frac{qt}{2}}\left\vert f^{\prime }\left( b\right) \right\vert
^{q\left( t/2\right) ^{s}}\left\vert f^{\prime }\left( a\right) \right\vert
^{q\left( \left( 2-t\right) /2\right) ^{s}}dt\right) ^{\frac{1}{q}}.
\label{2-8}
\end{eqnarray}%
If $0<\mu \leq 1\leq \eta ,\ 0<t,s\leq 1,$ then%
\begin{equation}
\mu ^{t^{s}}\leq \mu ^{ts},\ \ \eta ^{t^{s}}\leq \eta ^{ts+1-s}.  \label{2-a}
\end{equation}

(i) \ If $1\geq \left\vert f^{\prime }(a)\right\vert ,\ \left\vert f^{\prime
}(b)\right\vert ,$ by (\ref{2-a}) we obtain that%
\begin{eqnarray*}
&&\dint\limits_{0}^{1}\left( 1-t\right) \left( \frac{a}{b}\right) ^{\frac{qt%
}{2}}\left\vert f^{\prime }\left( a\right) \right\vert ^{q\left( t/2\right)
^{s}}\left\vert f^{\prime }\left( b\right) \right\vert ^{q\left( \left(
2-t\right) /2\right) ^{s}}dt \\
&\leq &\dint\limits_{0}^{1}\left( 1-t\right) \left( \frac{a}{b}\right) ^{%
\frac{qt}{2}}\left\vert f^{\prime }\left( a\right) \right\vert ^{\frac{qst}{2%
}}\left\vert f^{\prime }\left( b\right) \right\vert ^{q\frac{qs\left(
2-t\right) }{2}}dt=\left\vert f^{\prime }\left( b\right) \right\vert
^{qs}h_{1}\left( \theta \right) ,
\end{eqnarray*}%
\begin{eqnarray}
&&\dint\limits_{0}^{1}\left( 1-t\right) \left( \frac{b}{a}\right) ^{\frac{qt%
}{2}}\left\vert f^{\prime }\left( b\right) \right\vert ^{q\left( t/2\right)
^{s}}\left\vert f^{\prime }\left( a\right) \right\vert ^{q\left( \left(
2-t\right) /2\right) ^{s}}dt  \label{2-9} \\
&\leq &\dint\limits_{0}^{1}\left( 1-t\right) \left( \frac{b}{a}\right) ^{%
\frac{qt}{2}}\left\vert f^{\prime }\left( b\right) \right\vert ^{\frac{qst}{2%
}}\left\vert f^{\prime }\left( a\right) \right\vert ^{\frac{qs\left(
2-t\right) }{2}}dt=\left\vert f^{\prime }\left( a\right) \right\vert
^{qs}h_{1}\left( \vartheta \right) .  \notag
\end{eqnarray}

(ii) \ If $1\leq \left\vert f^{\prime }(a)\right\vert ,\ \left\vert
f^{\prime }(b)\right\vert ,$ by (\ref{2-a}) we obtain that%
\begin{eqnarray*}
&&\dint\limits_{0}^{1}\left( 1-t\right) \left( \frac{a}{b}\right) ^{\frac{qt%
}{2}}\left\vert f^{\prime }\left( a\right) \right\vert ^{q\left( t/2\right)
^{s}}\left\vert f^{\prime }\left( b\right) \right\vert ^{q\left( \left(
2-t\right) /2\right) ^{s}}dt \\
&\leq &\dint\limits_{0}^{1}\left( 1-t\right) \left( \frac{a}{b}\right) ^{%
\frac{qt}{2}}\left\vert f^{\prime }\left( a\right) \right\vert ^{q\left( 
\frac{st}{2}+1-s\right) }\left\vert f^{\prime }\left( b\right) \right\vert
^{q\left( 1-\frac{st}{2}\right) }dt \\
&=&\left( \left\vert f^{\prime }\left( a\right) \right\vert ^{1-s}\left\vert
f^{\prime }\left( b\right) \right\vert \right) ^{q}h_{1}\left( \theta
\right) ,
\end{eqnarray*}%
\begin{eqnarray}
&&\dint\limits_{0}^{1}\left( 1-t\right) \left( \frac{b}{a}\right) ^{\frac{qt%
}{2}}\left\vert f^{\prime }\left( b\right) \right\vert ^{q\left( t/2\right)
^{s}}\left\vert f^{\prime }\left( a\right) \right\vert ^{q\left( \left(
2-t\right) /2\right) ^{s}}dt  \label{2-10} \\
&\leq &\dint\limits_{0}^{1}\left( 1-t\right) \left( \frac{b}{a}\right) ^{%
\frac{qt}{2}}\left\vert f^{\prime }\left( b\right) \right\vert ^{q\left( 
\frac{st}{2}+1-s\right) }\left\vert f^{\prime }\left( a\right) \right\vert
^{q\left( 1-\frac{st}{2}\right) }dt  \notag \\
&=&\left( \left\vert f^{\prime }\left( b\right) \right\vert ^{1-s}\left\vert
f^{\prime }\left( a\right) \right\vert \right) ^{q}h_{1}\left( \vartheta
\right) .  \notag
\end{eqnarray}%
(iii) \ If $\left\vert f^{\prime }(a)\right\vert \leq 1\leq \ \left\vert
f^{\prime }(b)\right\vert ,$ by (\ref{2-a}) we obtain that%
\begin{eqnarray*}
&&\dint\limits_{0}^{1}\left( 1-t\right) \left( \frac{a}{b}\right) ^{\frac{qt%
}{2}}\left\vert f^{\prime }\left( a\right) \right\vert ^{q\left( t/2\right)
^{s}}\left\vert f^{\prime }\left( b\right) \right\vert ^{q\left( \left(
2-t\right) /2\right) ^{s}}dt \\
&\leq &\dint\limits_{0}^{1}\left( 1-t\right) \left( \frac{a}{b}\right) ^{%
\frac{qt}{2}}\left\vert f^{\prime }\left( a\right) \right\vert ^{\frac{qst}{2%
}}\left\vert f^{\prime }\left( b\right) \right\vert ^{q\left( 1-\frac{st}{2}%
\right) }dt=\left\vert f^{\prime }\left( b\right) \right\vert
^{q}h_{1}\left( \theta \right) ,
\end{eqnarray*}%
\begin{eqnarray}
&&\dint\limits_{0}^{1}\left( 1-t\right) \left( \frac{b}{a}\right) ^{\frac{qt%
}{2}}\left\vert f^{\prime }\left( b\right) \right\vert ^{q\left( t/2\right)
^{s}}\left\vert f^{\prime }\left( a\right) \right\vert ^{q\left( \left(
2-t\right) /2\right) ^{s}}dt  \label{2-11} \\
&\leq &\dint\limits_{0}^{1}\left( 1-t\right) \left( \frac{b}{a}\right) ^{%
\frac{qt}{2}}\left\vert f^{\prime }\left( b\right) \right\vert ^{q\left( 
\frac{st}{2}+1-s\right) }\left\vert f^{\prime }\left( a\right) \right\vert ^{%
\frac{qs\left( 2-t\right) }{2}}dt  \notag \\
&=&\left( \left\vert f^{\prime }\left( a\right) \right\vert ^{s}\left\vert
f^{\prime }\left( b\right) \right\vert ^{1-s}\right) ^{q}h_{1}\left(
\vartheta \right) .  \notag
\end{eqnarray}%
(iv) \ If $\left\vert f^{\prime }(b)\right\vert \leq 1\leq \ \left\vert
f^{\prime }(a)\right\vert ,$ by (\ref{2-a}) we obtain that%
\begin{eqnarray*}
&&\dint\limits_{0}^{1}\left( 1-t\right) \left( \frac{a}{b}\right) ^{\frac{qt%
}{2}}\left\vert f^{\prime }\left( a\right) \right\vert ^{q\left( t/2\right)
^{s}}\left\vert f^{\prime }\left( b\right) \right\vert ^{q\left( \left(
2-t\right) /2\right) ^{s}}dt \\
&\leq &\dint\limits_{0}^{1}\left( 1-t\right) \left( \frac{a}{b}\right) ^{%
\frac{qt}{2}}\left\vert f^{\prime }\left( a\right) \right\vert ^{q\left( 
\frac{st}{2}+1-s\right) }\left\vert f^{\prime }\left( b\right) \right\vert ^{%
\frac{qs\left( 2-t\right) }{2}}dt \\
&=&\left( \left\vert f^{\prime }\left( a\right) \right\vert ^{1-s}\left\vert
f^{\prime }\left( b\right) \right\vert ^{s}\right) ^{q}h_{1}\left( \theta
\right) ,
\end{eqnarray*}%
\begin{eqnarray}
&&\dint\limits_{0}^{1}\left( 1-t\right) \left( \frac{b}{a}\right) ^{\frac{qt%
}{2}}\left\vert f^{\prime }\left( b\right) \right\vert ^{q\left( t/2\right)
^{s}}\left\vert f^{\prime }\left( a\right) \right\vert ^{q\left( \left(
2-t\right) /2\right) ^{s}}dt  \label{2-12} \\
&\leq &\dint\limits_{0}^{1}\left( 1-t\right) \left( \frac{b}{a}\right) ^{%
\frac{qt}{2}}\left\vert f^{\prime }\left( b\right) \right\vert ^{\frac{qst}{2%
}}\left\vert f^{\prime }\left( a\right) \right\vert ^{q\left( 1-\frac{st}{2}%
\right) }dt=\left\vert f^{\prime }\left( a\right) \right\vert
^{q}h_{1}\left( \vartheta \right) .  \notag
\end{eqnarray}%
From (\ref{2-8}) to (\ref{2-12}), (\ref{2-3}) holds.

(2) Let $M_{1}=\underset{x\in \left[ a,\sqrt{ab}\right] }{\max \left\vert
f(x)\right\vert },\ M_{2}=\underset{x\in \left[ \sqrt{ab},b\right] }{\max
\left\vert f(x)\right\vert }.$ Since $\left\vert f^{\prime }\right\vert ^{q}$
is $s$-geometrically convex on $\left[ a,b\right] $, from lemma \ref{2.1}
and H\"{o}lder inequality, we have%
\begin{eqnarray*}
&&\left\vert f^{2}\left( \sqrt{ab}\right) -\frac{1}{\ln b-\ln a}%
\dint\limits_{a}^{b}\frac{f(x)}{x}f\left( \frac{ab}{x}\right) dx\right\vert
\\
&\leq &\dint\limits_{0}^{1}\frac{b}{2}\left\vert \ln \left( \frac{a}{b}%
\right) \right\vert t\left( \frac{a}{b}\right) ^{\frac{t}{2}}\left\vert
f\left( a^{1-t}\left( ab\right) ^{\frac{t}{2}}\right) \right\vert \left\vert
f^{\prime }\left( b^{1-t}\left( ab\right) ^{\frac{t}{2}}\right) \right\vert
\\
&&+\frac{a}{2}\ln \left( \frac{b}{a}\right) t\left( \frac{b}{a}\right) ^{%
\frac{t}{2}}\left\vert f^{\prime }\left( a^{1-t}\left( ab\right) ^{\frac{t}{2%
}}\right) \right\vert \left\vert f\left( b^{1-t}\left( ab\right) ^{\frac{t}{2%
}}\right) \right\vert dt
\end{eqnarray*}%
\begin{eqnarray*}
&\leq &\frac{b}{2}\ln \left( \frac{b}{a}\right)
M_{1}\dint\limits_{0}^{1}t\left( \frac{a}{b}\right) ^{\frac{t}{2}}\left\vert
f^{\prime }\left( b^{1-t}\left( ab\right) ^{\frac{t}{2}}\right) \right\vert
dt \\
&&+\frac{a}{2}\ln \left( \frac{b}{a}\right) M_{2}\dint\limits_{0}^{1}t\left( 
\frac{b}{a}\right) ^{\frac{t}{2}}\left\vert f^{\prime }\left( a^{1-t}\left(
ab\right) ^{\frac{t}{2}}\right) \right\vert dt
\end{eqnarray*}%
\begin{eqnarray*}
&\leq &\frac{b}{2}\ln \left( \frac{b}{a}\right) M_{1}\left(
\dint\limits_{0}^{1}tdt\right) ^{1-\frac{1}{q}}\left(
\dint\limits_{0}^{1}t\left( \frac{a}{b}\right) ^{\frac{qt}{2}}\left\vert
f^{\prime }\left( b^{1-t}\left( ab\right) ^{\frac{t}{2}}\right) \right\vert
^{q}dt\right) ^{\frac{1}{q}} \\
&&+\frac{a}{2}\ln \left( \frac{b}{a}\right) M_{2}\left(
\dint\limits_{0}^{1}tdt\right) ^{1-\frac{1}{q}}\left(
\dint\limits_{0}^{1}t\left( \frac{b}{a}\right) ^{\frac{qt}{2}}\left\vert
f^{\prime }\left( a^{1-t}\left( ab\right) ^{\frac{t}{2}}\right) \right\vert
^{q}dt\right) ^{\frac{1}{q}}
\end{eqnarray*}%
\begin{eqnarray}
&\leq &\frac{b\ln \left( \frac{b}{a}\right) M_{1}}{2}\left( \frac{1}{2}%
\right) ^{1-\frac{1}{q}}\left( \dint\limits_{0}^{1}t\left( \frac{a}{b}%
\right) ^{\frac{qt}{2}}\left\vert f^{\prime }\left( a\right) \right\vert
^{q\left( t/2\right) ^{s}}\left\vert f^{\prime }\left( b\right) \right\vert
^{q\left( \left( 2-t\right) /2\right) ^{s}}dt\right) ^{\frac{1}{q}}  \notag
\\
&&+\frac{a\ln \left( \frac{b}{a}\right) M_{2}}{2}\left( \frac{1}{2}\right)
^{1-\frac{1}{q}}\left( \dint\limits_{0}^{1}t\left( \frac{b}{a}\right) ^{%
\frac{qt}{2}}\left\vert f^{\prime }\left( b\right) \right\vert ^{q\left(
t/2\right) ^{s}}\left\vert f^{\prime }\left( a\right) \right\vert ^{q\left(
\left( 2-t\right) /2\right) ^{s}}dt\right) ^{\frac{1}{q}}.  \label{2-13}
\end{eqnarray}

(i) \ If $1\geq \left\vert f^{\prime }(a)\right\vert ,\ \left\vert f^{\prime
}(b)\right\vert ,$ by (\ref{2-a}) we obtain that%
\begin{equation*}
\dint\limits_{0}^{1}t\left( \frac{a}{b}\right) ^{\frac{qt}{2}}\left\vert
f^{\prime }\left( a\right) \right\vert ^{q\left( t/2\right) ^{s}}\left\vert
f^{\prime }\left( b\right) \right\vert ^{q\left( \left( 2-t\right) /2\right)
^{s}}dt\leq \left\vert f^{\prime }\left( b\right) \right\vert
^{qs}h_{2}\left( \theta \right) ,
\end{equation*}%
\begin{equation}
\dint\limits_{0}^{1}t\left( \frac{b}{a}\right) ^{\frac{qt}{2}}\left\vert
f^{\prime }\left( b\right) \right\vert ^{q\left( t/2\right) ^{s}}\left\vert
f^{\prime }\left( a\right) \right\vert ^{q\left( \left( 2-t\right) /2\right)
^{s}}dt\leq \left\vert f^{\prime }\left( a\right) \right\vert
^{qs}h_{2}\left( \vartheta \right) .  \label{2-14}
\end{equation}

(ii) \ If $1\leq \left\vert f^{\prime }(a)\right\vert ,\ \left\vert
f^{\prime }(b)\right\vert ,$ by (\ref{2-a}) we obtain that%
\begin{equation*}
\dint\limits_{0}^{1}t\left( \frac{a}{b}\right) ^{\frac{qt}{2}}\left\vert
f^{\prime }\left( a\right) \right\vert ^{q\left( t/2\right) ^{s}}\left\vert
f^{\prime }\left( b\right) \right\vert ^{q\left( \left( 2-t\right) /2\right)
^{s}}dt\leq \left( \left\vert f^{\prime }\left( a\right) \right\vert
^{1-s}\left\vert f^{\prime }\left( b\right) \right\vert \right)
^{q}h_{2}\left( \theta \right) ,
\end{equation*}%
\begin{equation}
\dint\limits_{0}^{1}t\left( \frac{b}{a}\right) ^{\frac{qt}{2}}\left\vert
f^{\prime }\left( b\right) \right\vert ^{q\left( t/2\right) ^{s}}\left\vert
f^{\prime }\left( a\right) \right\vert ^{q\left( \left( 2-t\right) /2\right)
^{s}}dt\leq \left( \left\vert f^{\prime }\left( b\right) \right\vert
^{1-s}\left\vert f^{\prime }\left( a\right) \right\vert \right)
^{q}h_{2}\left( \vartheta \right) .  \label{2-15}
\end{equation}%
(iii) \ If $\left\vert f^{\prime }(a)\right\vert \leq 1\leq \ \left\vert
f^{\prime }(b)\right\vert ,$ by (\ref{2-a}) we obtain that%
\begin{equation*}
\dint\limits_{0}^{1}t\left( \frac{a}{b}\right) ^{\frac{qt}{2}}\left\vert
f^{\prime }\left( a\right) \right\vert ^{q\left( t/2\right) ^{s}}\left\vert
f^{\prime }\left( b\right) \right\vert ^{q\left( \left( 2-t\right) /2\right)
^{s}}dt\leq \left\vert f^{\prime }\left( b\right) \right\vert
^{q}h_{2}\left( \theta \right) ,
\end{equation*}%
\begin{equation}
\dint\limits_{0}^{1}t\left( \frac{b}{a}\right) ^{\frac{qt}{2}}\left\vert
f^{\prime }\left( b\right) \right\vert ^{q\left( t/2\right) ^{s}}\left\vert
f^{\prime }\left( a\right) \right\vert ^{q\left( \left( 2-t\right) /2\right)
^{s}}dt\leq \left( \left\vert f^{\prime }\left( a\right) \right\vert
^{s}\left\vert f^{\prime }\left( b\right) \right\vert ^{1-s}\right)
^{q}h_{2}\left( \vartheta \right) .  \label{2-16}
\end{equation}%
(iv) \ If $\left\vert f^{\prime }(b)\right\vert \leq 1\leq \ \left\vert
f^{\prime }(a)\right\vert ,$ by (\ref{2-a}) we obtain that%
\begin{equation*}
\dint\limits_{0}^{1}t\left( \frac{a}{b}\right) ^{\frac{qt}{2}}\left\vert
f^{\prime }\left( a\right) \right\vert ^{q\left( t/2\right) ^{s}}\left\vert
f^{\prime }\left( b\right) \right\vert ^{q\left( \left( 2-t\right) /2\right)
^{s}}dt\leq \left( \left\vert f^{\prime }\left( a\right) \right\vert
^{1-s}\left\vert f^{\prime }\left( b\right) \right\vert ^{s}\right)
^{q}h_{2}\left( \theta \right) ,
\end{equation*}%
\begin{equation}
\dint\limits_{0}^{1}t\left( \frac{b}{a}\right) ^{\frac{qt}{2}}\left\vert
f^{\prime }\left( b\right) \right\vert ^{q\left( t/2\right) ^{s}}\left\vert
f^{\prime }\left( a\right) \right\vert ^{q\left( \left( 2-t\right) /2\right)
^{s}}dt\leq \left\vert f^{\prime }\left( a\right) \right\vert
^{q}h_{2}\left( \vartheta \right) .  \label{2-17}
\end{equation}%
From (\ref{2-13}) to (\ref{2-17}), (\ref{2-4}) holds. This completes the
required proof.
\end{proof}

If taking $s=1$ in Theorem \ref{2.2}, we can derive the following corollary.

\begin{corollary}
Let $f:I\subseteq \mathbb{R}_{+}\mathbb{\rightarrow R}_{+}$ be
differentiable on $I^{\circ }$, and $a,b\in I^{\circ }$ with $a<b$ and $%
f^{\prime }\in L\left[ a,b\right] .$ If $\left\vert f^{\prime }\right\vert
^{q}$ is geometrically convex on $\left[ a,b\right] $ for $q\geq 1$, then%
\begin{equation*}
\left\vert f(a)f(b)-\frac{1}{\ln b-\ln a}\dint\limits_{a}^{b}\frac{f(x)}{x}%
f\left( \frac{ab}{x}\right) dx\right\vert \leq \ln \left( \frac{b}{a}\right)
\left( \frac{1}{2}\right) ^{2-\frac{1}{q}}H_{1}\left( 1,q;h_{1}(\theta
),h_{1}(\vartheta )\right) ,
\end{equation*}%
\begin{equation*}
\left\vert f^{2}\left( \sqrt{ab}\right) -\frac{1}{\ln b-\ln a}%
\dint\limits_{a}^{b}\frac{f(x)}{x}f\left( \frac{ab}{x}\right) dx\right\vert
\leq \ln \left( \frac{b}{a}\right) \left( \frac{1}{2}\right) ^{2-\frac{1}{q}%
}H_{2}\left( 1,q;h_{2}(\theta ),h_{2}(\vartheta )\right) ,
\end{equation*}%
where $\theta ,\ \vartheta $, $H_{1},H_{2},h_{1}$ and $h_{2}$ are the same
as in Theorem \ref{2.2}.
\end{corollary}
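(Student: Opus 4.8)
The plan is to obtain this corollary as nothing more than the specialization $s=1$ of Theorem \ref{2.2}, so the work is entirely one of matching hypotheses and then bookkeeping on the right-hand sides. First I would observe that geometric convexity is literally the $s=1$ instance of $s$-geometric convexity: the defining inequality $f(x^{\lambda}y^{1-\lambda})\leq f(x)^{\lambda^{s}}f(y)^{(1-\lambda)^{s}}$ reduces to $f(x^{\lambda}y^{1-\lambda})\leq f(x)^{\lambda}f(y)^{1-\lambda}$ precisely when $s=1$. Hence the assumption that $\left\vert f^{\prime}\right\vert^{q}$ is geometrically convex on $[a,b]$ is exactly the assumption of Theorem \ref{2.2} with $s=1$, and every remaining hypothesis (differentiability on $I^{\circ}$, $a,b\in I^{\circ}$ with $a<b$, $q\geq 1$, and $f^{\prime}\in L[a,b]$) is carried over verbatim.

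Next I would simply substitute $s=1$ into the two conclusions (\ref{2-3}) and (\ref{2-4}). The prefactor $\ln(b/a)\,(1/2)^{2-1/q}$ and the auxiliary functions $h_{1},h_{2}$ of (\ref{2-5}) do not involve $s$ at all, so they are inherited unchanged; the quantities $\theta,\vartheta$ of (\ref{2-6}) specialize to $\theta=\bigl(a\left\vert f^{\prime}(a)\right\vert/(b\left\vert f^{\prime}(b)\right\vert)\bigr)^{q/2}$ and $\vartheta=\bigl(b\left\vert f^{\prime}(b)\right\vert/(a\left\vert f^{\prime}(a)\right\vert)\bigr)^{q/2}$. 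This immediately produces the two claimed bounds with $H_{i}\bigl(1,q;h_{i}(\theta),h_{i}(\vartheta)\bigr)$ on the right, for $i=1,2$.

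The one point genuinely worth checking — and the only place where something is verified rather than copied — is the behaviour of the four-branch quantity $H_{i}$ of (\ref{2-7}) at $s=1$. Every exponent of the form $\left\vert f^{\prime}(\cdot)\right\vert^{1-s}$ becomes $\left\vert f^{\prime}(\cdot)\right\vert^{0}=1$, and each exponent $\left\vert f^{\prime}(\cdot)\right\vert^{s}$ becomes $\left\vert f^{\prime}(\cdot)\right\vert$, so all four cases collapse to the single expression $b\left\vert f^{\prime}(b)\right\vert M_{1}h_{i}^{1/q}(\theta)+a\left\vert f^{\prime}(a)\right\vert M_{2}h_{i}^{1/q}(\vartheta)$, independently of whether $\left\vert f^{\prime}(a)\right\vert$ and $\left\vert f^{\prime}(b)\right\vert$ lie above or below $1$. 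I would record this collapse explicitly, both because it confirms that the stated bound is in fact unconditional on the sizes of $\left\vert f^{\prime}(a)\right\vert,\left\vert f^{\prime}(b)\right\vert$ and because it lets me package the result in the uniform notation $H_{i}(1,q;\cdot,\cdot)$ that matches the theorem. There is no real obstacle: the full content of the corollary already lives inside Theorem \ref{2.2}, and the proof amounts to the substitution $s=1$ together with this elementary simplification of $H_{i}$.
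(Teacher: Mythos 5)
Your proof is correct and coincides with the paper's own derivation, which simply states the corollary as the specialization $s=1$ of Theorem \ref{2.2}. Your extra observation that all four branches of $H_{i}$ in (\ref{2-7}) collapse at $s=1$ to the single expression $b\left\vert f^{\prime }(b)\right\vert M_{1}h_{i}^{1/q}(\theta )+a\left\vert f^{\prime }(a)\right\vert M_{2}h_{i}^{1/q}(\vartheta )$ is a valid and worthwhile check, but it does not change the route: both proofs are the same one-line substitution.
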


If taking $q=1$ in Theorem \ref{2.2}, we can derive the following corollary.

\begin{corollary}
Let $f:I\subseteq \mathbb{R}_{+}\mathbb{\rightarrow R}_{+}$ be
differentiable on $I^{\circ }$, and $a,b\in I^{\circ }$ with $a<b$ and $%
f^{\prime }\in L\left[ a,b\right] .$ If $\left\vert f^{\prime }\right\vert $
is $s$-geometrically convex on $\left[ a,b\right] $ for $s\in \left( 0,1%
\right] ,$ then%
\begin{equation*}
\left\vert f(a)f(b)-\frac{1}{\ln b-\ln a}\dint\limits_{a}^{b}\frac{f(x)}{x}%
f\left( \frac{ab}{x}\right) dx\right\vert \leq \ln \left( \frac{b}{a}\right)
\left( \frac{1}{2}\right) H_{1}\left( s,1;h_{1}(\theta ),h_{1}(\vartheta
)\right) ,
\end{equation*}%
\begin{equation*}
\left\vert f^{2}\left( \sqrt{ab}\right) -\frac{1}{\ln b-\ln a}%
\dint\limits_{a}^{b}\frac{f(x)}{x}f\left( \frac{ab}{x}\right) dx\right\vert
\leq \ln \left( \frac{b}{a}\right) \left( \frac{1}{2}\right) H_{2}\left(
s,1;h_{2}(\theta ),h_{2}(\vartheta )\right) ,
\end{equation*}%
where $\theta ,\ \vartheta $, $H_{1},H_{2},h_{1}$ and $h_{2}$ are the same
as in Theorem \ref{2.2}.
\end{corollary}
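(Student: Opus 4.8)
The plan is to derive this corollary directly from Theorem~\ref{2.2} by specializing to $q=1$, so that no new estimation is required; the entire task reduces to verifying that the hypotheses transfer and that the constants and auxiliary quantities in (\ref{2-3})--(\ref{2-4}) collapse correctly.

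First I would check admissibility of the specialization. Theorem~\ref{2.2} is stated for every $q\ge 1$, and $q=1$ is permitted; moreover the hypothesis that $\left\vert f^{\prime }\right\vert ^{q}$ is $s$-geometrically convex becomes exactly the hypothesis that $\left\vert f^{\prime }\right\vert $ is $s$-geometrically convex when $q=1$, which is precisely the assumption of the corollary. Hence both inequalities (\ref{2-3}) and (\ref{2-4}) hold with $q=1$, and it remains only to read off what they assert in that case.

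Next I would simplify the constants. The outer factor $\left(\tfrac12\right)^{2-1/q}$ reduces to $\left(\tfrac12\right)^{2-1}=\tfrac12$, matching the factor $\tfrac12$ appearing in the corollary. In the definition (\ref{2-7}) of $H_{i}$ each exponent $1/q$ on $h_{i}$ becomes $1$, so $h_{i}^{1/q}(\theta)=h_{i}(\theta)$ and $h_{i}^{1/q}(\vartheta)=h_{i}(\vartheta)$, while the four-way case split on the sizes of $\left\vert f^{\prime }(a)\right\vert $ and $\left\vert f^{\prime }(b)\right\vert $ is untouched; thus $H_{i}\bigl(s,1;h_{i}(\theta),h_{i}(\vartheta)\bigr)$ is exactly the $q=1$ instance of (\ref{2-7}). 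The quantities $\theta,\vartheta$ of (\ref{2-6}) and the functions $h_{1},h_{2}$ of (\ref{2-5}) are carried over verbatim, with the exponent $q/2=\tfrac12$ retained inside $\theta,\vartheta$. Substituting these simplifications into (\ref{2-3}) and (\ref{2-4}) produces precisely the two displayed inequalities of the corollary.

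There is no genuine analytic obstacle, since the statement is a pure specialization; the only point requiring care is the bookkeeping in the piecewise formula (\ref{2-7}), namely confirming that setting $q=1$ alters nothing except replacing $h_{i}^{1/q}$ by $h_{i}$. As an alternative I could reprove the corollary from scratch by repeating the argument of Theorem~\ref{2.2} with $q=1$; in that route the power-mean (H\"{o}lder) step degenerates, because $\left(\int_{0}^{1}(1-t)\,dt\right)^{1-1/q}$ and $\left(\int_{0}^{1}t\,dt\right)^{1-1/q}$ both equal $1$, and the single remaining integral is evaluated directly through the identities $\int_{0}^{1}(1-t)u^{t}\,dt=h_{1}(u)$ and $\int_{0}^{1}t\,u^{t}\,dt=h_{2}(u)$. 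Both routes yield the same conclusion, so I would present the shorter specialization argument.
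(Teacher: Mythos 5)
Your proof is correct and is exactly the route the paper takes: the paper introduces this corollary with the single remark that it follows by setting $q=1$ in Theorem \ref{2.2}, and your verification that the hypotheses coincide, that $\left(\tfrac{1}{2}\right)^{2-\frac{1}{q}}$ collapses to $\tfrac{1}{2}$, and that $h_{i}^{1/q}$ becomes $h_{i}$ in (\ref{2-7}) is precisely the bookkeeping that remark leaves implicit. Nothing further is needed.
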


\begin{theorem}
\label{2.3}Let $f:I\subseteq \mathbb{R}_{+}\mathbb{\rightarrow R}_{+}$ be
differentiable on $I^{\circ }$, and $a,b\in I^{\circ }$ with $a<b$ and $%
f^{\prime }\in L\left[ a,b\right] .$ If $\left\vert f^{\prime }\right\vert
^{q}$ is $s$-geometrically convex on $\left[ a,b\right] $ for $q>1$ and $%
s\in \left( 0,1\right] ,$ then%
\begin{equation}
\left\vert f(a)f(b)-\frac{1}{\ln b-\ln a}\dint\limits_{a}^{b}\frac{f(x)}{x}%
f\left( \frac{ab}{x}\right) dx\right\vert \leq \frac{\ln \left( \frac{b}{a}%
\right) }{2}\left( \frac{q-1}{2q-1}\right) ^{1-\frac{1}{q}}H_{3}\left(
s,q;h_{3}(\theta ),h_{3}(\vartheta )\right) ,  \label{2-18a}
\end{equation}%
\begin{equation}
\left\vert f^{2}\left( \sqrt{ab}\right) -\frac{1}{\ln b-\ln a}%
\dint\limits_{a}^{b}\frac{f(x)}{x}f\left( \frac{ab}{x}\right) dx\right\vert
\leq \frac{\ln \left( \frac{b}{a}\right) }{2}\left( \frac{q-1}{2q-1}\right)
^{1-\frac{1}{q}}H_{3}\left( s,q;h_{3}(\theta ),h_{3}(\vartheta )\right) ,
\label{2-18b}
\end{equation}%
where $M_{1}=\underset{x\in \left[ a,\sqrt{ab}\right] }{\max \left\vert
f(x)\right\vert },\ M_{2}=\underset{x\in \left[ \sqrt{ab},b\right] }{\max
\left\vert f(x)\right\vert },$%
\begin{equation*}
h_{3}(u)=\left\{ 
\begin{array}{c}
1,\ \ \ \ u=1 \\ 
\frac{u-1}{\ln u},\ u\neq 1%
\end{array}%
\right. ,\ u>0
\end{equation*}%
\begin{eqnarray*}
&&H_{3}\left( s,q;h_{3}(\theta ),h_{3}(\vartheta )\right) \\
&=&\left\{ 
\begin{array}{c}
b\left\vert f^{\prime }(b)\right\vert ^{s}M_{1}h_{3}^{1/q}\left( \theta
\right) +a\left\vert f^{\prime }(a)\right\vert ^{s}M_{2}h_{3}^{1/q}\left(
\vartheta \right) , \\ 
\ \left\vert f^{\prime }(a)\right\vert ,\ \left\vert f^{\prime
}(b)\right\vert \leq 1, \\ 
b\left\vert f^{\prime }\left( b\right) \right\vert \left\vert f^{\prime
}\left( a\right) \right\vert ^{1-s}M_{1}h_{3}^{1/q}\left( \theta \right)
+a\left\vert f^{\prime }\left( a\right) \right\vert \left\vert f^{\prime
}\left( b\right) \right\vert ^{1-s}M_{2}h_{3}^{1/q}\left( \vartheta \right) ,
\\ 
\ \left\vert f^{\prime }(a)\right\vert ,\ \left\vert f^{\prime
}(b)\right\vert \geq 1, \\ 
b\left\vert f^{\prime }(b)\right\vert M_{1}h_{3}^{1/q}\left( \theta \right)
+a\left\vert f^{\prime }\left( a\right) \right\vert ^{s}\left\vert f^{\prime
}\left( b\right) \right\vert ^{1-s}M_{2}h_{3}^{1/q}\left( \vartheta \right) ,
\\ 
\ \left\vert f^{\prime }(a)\right\vert \leq 1\leq \ \left\vert f^{\prime
}(b)\right\vert , \\ 
b\left\vert f^{\prime }(b)\right\vert ^{s}M_{1}h_{3}^{1/q}\left( \theta
\right) +a\left\vert f^{\prime }(a)\right\vert M_{2}h_{3}^{1/q}\left(
\vartheta \right) , \\ 
\ \left\vert f^{\prime }(b)\right\vert \leq 1\leq \ \left\vert f^{\prime
}(a)\right\vert .%
\end{array}%
\right. ,
\end{eqnarray*}%
and $\theta $,$\ \vartheta $ are the same as in (\ref{2-6}).
\end{theorem}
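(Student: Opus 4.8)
The plan is to follow the same route as in the proof of Theorem \ref{2.2}, with the power–mean inequality replaced by the H\"older inequality. Starting from the two identities (\ref{2-1}) and (\ref{2-2}) of Lemma \ref{2.1}, I would first apply the triangle inequality, use $\left|\ln(a/b)\right|=\ln(b/a)$ and $|t-1|=1-t$ on $[0,1]$, and then bound the factors $\left|f\left(a^{1-t}(ab)^{t/2}\right)\right|$ and $\left|f\left(b^{1-t}(ab)^{t/2}\right)\right|$ by $M_{1}$ and $M_{2}$, exactly as in the opening displayed chain of Theorem \ref{2.2}. This reduces both left-hand sides to a sum of two weighted integrals of $\left(a/b\right)^{t/2}\left|f'(\cdot)\right|$ and $\left(b/a\right)^{t/2}\left|f'(\cdot)\right|$, the weight being $1-t$ for (\ref{2-18a}) and $t$ for (\ref{2-18b}); the prefactors $b/2$ and $a/2$ contribute the leading $\tfrac12$, with $a,b$ to be absorbed into $H_{3}$.

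Next, instead of the power–mean inequality I would invoke H\"older's inequality with conjugate exponents $p=q/(q-1)$ and $q$, placing the \emph{entire} weight $1-t$ (resp.\ $t$) into the first factor. Since
\[
\int_{0}^{1}(1-t)^{q/(q-1)}\,dt=\int_{0}^{1}t^{q/(q-1)}\,dt=\frac{q-1}{2q-1},
\]
raising this to the power $1-\tfrac1q$ yields the common constant $\left(\frac{q-1}{2q-1}\right)^{1-\frac1q}$ appearing in both (\ref{2-18a}) and (\ref{2-18b}). It is precisely because the full weight is absorbed into the first H\"older factor that the remaining integrals carry no surviving factor of $t$ or $1-t$; consequently the two estimates collapse to the \emph{same} quantity $H_{3}$, in contrast with Theorem \ref{2.2}, where the retained weight forced the two distinct functions $h_{1}$ and $h_{2}$.

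For the second H\"older factor I would use the $s$-geometric convexity of $\left|f'\right|^{q}$, e.g.
\[
\left|f'\!\left(b^{1-t}(ab)^{t/2}\right)\right|^{q}\le \left|f'(a)\right|^{q(t/2)^{s}}\left|f'(b)\right|^{q((2-t)/2)^{s}},
\]
together with the auxiliary inequality (\ref{2-a}) to replace the awkward exponents $t^{s}$ by $ts$ (when the base is $\le 1$) and by $ts+1-s$ (when the base is $\ge 1$). Splitting according to the four sign patterns of $\left|f'(a)\right|,\left|f'(b)\right|$ relative to $1$ gives, in each case, an integrand of the clean form $(\text{const})\cdot\theta^{t}$ or $(\text{const})\cdot\vartheta^{t}$ with $\theta,\vartheta$ as in (\ref{2-6}). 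The elementary evaluation
\[
\int_{0}^{1}u^{t}\,dt=\frac{u-1}{\ln u}=h_{3}(u),\qquad u>0,
\]
then produces the four descriptions of $H_{3}$, and adding the two summands delivers (\ref{2-18a}) and (\ref{2-18b}).

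The main obstacle is organisational rather than analytic: the four-case bookkeeping driven by (\ref{2-a}) must be executed consistently for \emph{both} summands, since the roles of $\left|f'(a)\right|$ and $\left|f'(b)\right|$ are interchanged between the $\theta$-integral and the $\vartheta$-integral, and one must check in every case that the base $u$ feeding $h_{3}$ is indeed $\theta$ or $\vartheta$. Once the factorisation $(\text{const})\cdot u^{t}$ is verified in each branch and the single evaluation $\int_{0}^{1}u^{t}\,dt=h_{3}(u)$ is applied, the remainder is routine and the proof closes.
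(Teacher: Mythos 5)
Your proposal is correct and follows essentially the same route as the paper's own proof: starting from Lemma \ref{2.1}, bounding $|f|$ by $M_{1},M_{2}$, applying H\"older's inequality with the full weight $1-t$ (resp.\ $t$) absorbed into the first factor to produce the constant $\left(\tfrac{q-1}{2q-1}\right)^{1-\frac{1}{q}}$, then running the four-case analysis via (\ref{2-a}) and evaluating $\int_{0}^{1}u^{t}\,dt=h_{3}(u)$ to obtain $H_{3}$. Your remark that the disappearance of the weight from the second H\"older factor is exactly why both estimates collapse to the same $H_{3}$ (unlike the $h_{1},h_{2}$ split in Theorem \ref{2.2}) is precisely what happens in the paper's computation.
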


\begin{proof}
(1) Let $M_{1}=\underset{x\in \left[ a,\sqrt{ab}\right] }{\max \left\vert
f(x)\right\vert },\ M_{2}=\underset{x\in \left[ \sqrt{ab},b\right] }{\max
\left\vert f(x)\right\vert }.$ Since $\left\vert f^{\prime }\right\vert ^{q}$
is $s$-geometrically convex on $\left[ a,b\right] $, from lemma \ref{2.1}
and H\"{o}lder inequality, we have%
\begin{eqnarray*}
&&\left\vert f(a)f(b)-\frac{1}{\ln b-\ln a}\dint\limits_{a}^{b}\frac{f(x)}{x}%
f\left( \frac{ab}{x}\right) dx\right\vert \\
&\leq &\dint\limits_{0}^{1}\frac{b}{2}\left\vert \ln \left( \frac{a}{b}%
\right) \right\vert \left\vert t-1\right\vert \left( \frac{a}{b}\right) ^{%
\frac{t}{2}}\left\vert f\left( a^{1-t}\left( ab\right) ^{\frac{t}{2}}\right)
\right\vert \left\vert f^{\prime }\left( b^{1-t}\left( ab\right) ^{\frac{t}{2%
}}\right) \right\vert \\
&&+\frac{a}{2}\ln \left( \frac{b}{a}\right) \left\vert t-1\right\vert \left( 
\frac{b}{a}\right) ^{\frac{t}{2}}\left\vert f^{\prime }\left( a^{1-t}\left(
ab\right) ^{\frac{t}{2}}\right) \right\vert \left\vert f\left( b^{1-t}\left(
ab\right) ^{\frac{t}{2}}\right) \right\vert dt
\end{eqnarray*}%
\begin{eqnarray*}
&\leq &\frac{b}{2}\left\vert \ln \left( \frac{a}{b}\right) \right\vert
M_{1}\dint\limits_{0}^{1}\left( 1-t\right) \left( \frac{a}{b}\right) ^{\frac{%
t}{2}}\left\vert f^{\prime }\left( b^{1-t}\left( ab\right) ^{\frac{t}{2}%
}\right) \right\vert dt \\
&&+\frac{a}{2}\ln \left( \frac{b}{a}\right) M_{2}\dint\limits_{0}^{1}\left(
1-t\right) \left( \frac{b}{a}\right) ^{\frac{t}{2}}\left\vert f^{\prime
}\left( a^{1-t}\left( ab\right) ^{\frac{t}{2}}\right) \right\vert dt
\end{eqnarray*}%
\begin{eqnarray*}
&\leq &\frac{b}{2}\left\vert \ln \left( \frac{a}{b}\right) \right\vert
M_{1}\left( \dint\limits_{0}^{1}\left( 1-t\right) ^{\frac{q}{q-1}}dt\right)
^{1-\frac{1}{q}}\left( \dint\limits_{0}^{1}\left( \frac{a}{b}\right) ^{\frac{%
qt}{2}}\left\vert f^{\prime }\left( b^{1-t}\left( ab\right) ^{\frac{t}{2}%
}\right) \right\vert ^{q}dt\right) ^{\frac{1}{q}} \\
&&+\frac{a}{2}\ln \left( \frac{b}{a}\right) M_{2}\left(
\dint\limits_{0}^{1}\left( 1-t\right) ^{\frac{q}{q-1}}dt\right) ^{1-\frac{1}{%
q}}\left( \dint\limits_{0}^{1}\left( \frac{b}{a}\right) ^{\frac{qt}{2}%
}\left\vert f^{\prime }\left( a^{1-t}\left( ab\right) ^{\frac{t}{2}}\right)
\right\vert ^{q}dt\right) ^{\frac{1}{q}}
\end{eqnarray*}%
\begin{eqnarray}
&\leq &\frac{b\ln \left( \frac{b}{a}\right) M_{1}}{2}\left( \frac{q-1}{2q-1}%
\right) ^{1-\frac{1}{q}}\left( \dint\limits_{0}^{1}\left( \frac{a}{b}\right)
^{\frac{qt}{2}}\left\vert f^{\prime }\left( a\right) \right\vert ^{q\left(
t/2\right) ^{s}}\left\vert f^{\prime }\left( b\right) \right\vert ^{q\left(
\left( 2-t\right) /2\right) ^{s}}dt\right) ^{\frac{1}{q}}  \notag \\
&&+\frac{a\ln \left( \frac{b}{a}\right) M_{2}}{2}\left( \frac{q-1}{2q-1}%
\right) ^{1-\frac{1}{q}}\left( \dint\limits_{0}^{1}\left( \frac{b}{a}\right)
^{\frac{qt}{2}}\left\vert f^{\prime }\left( b\right) \right\vert ^{q\left(
t/2\right) ^{s}}\left\vert f^{\prime }\left( a\right) \right\vert ^{q\left(
\left( 2-t\right) /2\right) ^{s}}dt\right) ^{\frac{1}{q}}.  \label{2-18}
\end{eqnarray}

(i) \ If $1\geq \left\vert f^{\prime }(a)\right\vert ,\ \left\vert f^{\prime
}(b)\right\vert ,$ by (\ref{2-a}) we have%
\begin{equation*}
\dint\limits_{0}^{1}\left( \frac{a}{b}\right) ^{\frac{qt}{2}}\left\vert
f^{\prime }\left( a\right) \right\vert ^{q\left( t/2\right) ^{s}}\left\vert
f^{\prime }\left( b\right) \right\vert ^{q\left( \left( 2-t\right) /2\right)
^{s}}dt=\left\vert f^{\prime }\left( b\right) \right\vert ^{qs}h_{3}\left(
\theta \right) ,
\end{equation*}%
\begin{equation}
\dint\limits_{0}^{1}\left( \frac{b}{a}\right) ^{\frac{qt}{2}}\left\vert
f^{\prime }\left( b\right) \right\vert ^{q\left( t/2\right) ^{s}}\left\vert
f^{\prime }\left( a\right) \right\vert ^{q\left( \left( 2-t\right) /2\right)
^{s}}dt=\left\vert f^{\prime }\left( a\right) \right\vert ^{qs}h_{3}\left(
\vartheta \right) .  \label{2-19}
\end{equation}

(ii) \ If $1\leq \left\vert f^{\prime }(a)\right\vert ,\ \left\vert
f^{\prime }(b)\right\vert ,$ by (\ref{2-a}) we have%
\begin{equation*}
\dint\limits_{0}^{1}\left( \frac{a}{b}\right) ^{\frac{qt}{2}}\left\vert
f^{\prime }\left( a\right) \right\vert ^{q\left( t/2\right) ^{s}}\left\vert
f^{\prime }\left( b\right) \right\vert ^{q\left( \left( 2-t\right) /2\right)
^{s}}dt\leq \left( \left\vert f^{\prime }\left( b\right) \right\vert
\left\vert f^{\prime }\left( a\right) \right\vert ^{1-s}\right)
^{q}h_{3}\left( \theta \right) ,
\end{equation*}%
\begin{equation}
\dint\limits_{0}^{1}\left( \frac{b}{a}\right) ^{\frac{qt}{2}}\left\vert
f^{\prime }\left( b\right) \right\vert ^{q\left( t/2\right) ^{s}}\left\vert
f^{\prime }\left( a\right) \right\vert ^{q\left( \left( 2-t\right) /2\right)
^{s}}dt\leq \left( \left\vert f^{\prime }\left( a\right) \right\vert
\left\vert f^{\prime }\left( b\right) \right\vert ^{1-s}\right)
^{q}h_{3}\left( \vartheta \right) .  \label{2-20}
\end{equation}%
(iii) \ If $\left\vert f^{\prime }(a)\right\vert \leq 1\leq \ \left\vert
f^{\prime }(b)\right\vert ,$ by (\ref{2-a}) we obtain that%
\begin{equation*}
\dint\limits_{0}^{1}\left( \frac{a}{b}\right) ^{\frac{qt}{2}}\left\vert
f^{\prime }\left( a\right) \right\vert ^{q\left( t/2\right) ^{s}}\left\vert
f^{\prime }\left( b\right) \right\vert ^{q\left( \left( 2-t\right) /2\right)
^{s}}dt\leq \left\vert f^{\prime }\left( b\right) \right\vert
^{q}h_{3}\left( \theta \right) ,
\end{equation*}%
\begin{equation}
\dint\limits_{0}^{1}\left( \frac{b}{a}\right) ^{\frac{qt}{2}}\left\vert
f^{\prime }\left( b\right) \right\vert ^{q\left( t/2\right) ^{s}}\left\vert
f^{\prime }\left( a\right) \right\vert ^{q\left( \left( 2-t\right) /2\right)
^{s}}dt\leq \left( \left\vert f^{\prime }\left( a\right) \right\vert
^{s}\left\vert f^{\prime }\left( b\right) \right\vert ^{1-s}\right)
^{q}h_{3}\left( \vartheta \right) .  \label{2-21}
\end{equation}%
(iv) \ If $\left\vert f^{\prime }(b)\right\vert \leq 1\leq \ \left\vert
f^{\prime }(a)\right\vert ,$ by (\ref{2-a}) we obtain that%
\begin{equation*}
\dint\limits_{0}^{1}\left( \frac{a}{b}\right) ^{\frac{qt}{2}}\left\vert
f^{\prime }\left( a\right) \right\vert ^{q\left( t/2\right) ^{s}}\left\vert
f^{\prime }\left( b\right) \right\vert ^{q\left( \left( 2-t\right) /2\right)
^{s}}dt\leq \left( \left\vert f^{\prime }\left( a\right) \right\vert
^{1-s}\left\vert f^{\prime }\left( b\right) \right\vert ^{s}\right)
^{q}h_{3}\left( \theta \right) ,
\end{equation*}%
\begin{equation}
\dint\limits_{0}^{1}\left( \frac{b}{a}\right) ^{\frac{qt}{2}}\left\vert
f^{\prime }\left( b\right) \right\vert ^{q\left( t/2\right) ^{s}}\left\vert
f^{\prime }\left( a\right) \right\vert ^{q\left( \left( 2-t\right) /2\right)
^{s}}dt\leq \left\vert f^{\prime }\left( a\right) \right\vert
^{q}h_{3}\left( \vartheta \right) .  \label{2-22}
\end{equation}%
From (\ref{2-18}) to (\ref{2-22}), (\ref{2-18a}) holds.

(2) Let $M_{1}=\underset{x\in \left[ a,\sqrt{ab}\right] }{\max \left\vert
f(x)\right\vert },\ M_{2}=\underset{x\in \left[ \sqrt{ab},b\right] }{\max
\left\vert f(x)\right\vert }.$ Since $\left\vert f^{\prime }\right\vert ^{q}$
is $s$-geometrically convex on $\left[ a,b\right] $, from lemma \ref{2.1}
and H\"{o}lder inequality, we have%
\begin{equation*}
\left\vert f^{2}\left( \sqrt{ab}\right) -\frac{1}{\ln b-\ln a}%
\dint\limits_{a}^{b}\frac{f(x)}{x}f\left( \frac{ab}{x}\right) dx\right\vert
\end{equation*}%
\begin{eqnarray}
&\leq &\frac{b}{2}\ln \left( \frac{b}{a}\right) M_{1}\left( \frac{q-1}{2q-1}%
\right) ^{1-\frac{1}{q}}\left( \dint\limits_{0}^{1}\left( \frac{a}{b}\right)
^{\frac{qt}{2}}\left\vert f^{\prime }\left( a\right) \right\vert ^{q\left(
t/2\right) ^{s}}\left\vert f^{\prime }\left( b\right) \right\vert ^{q\left(
\left( 2-t\right) /2\right) ^{s}}dt\right) ^{\frac{1}{q}}  \notag \\
&&+\frac{a}{2}\ln \left( \frac{b}{a}\right) M_{2}\left( \frac{q-1}{2q-1}%
\right) ^{1-\frac{1}{q}}\left( \dint\limits_{0}^{1}\left( \frac{b}{a}\right)
^{\frac{qt}{2}}\left\vert f^{\prime }\left( b\right) \right\vert ^{q\left(
t/2\right) ^{s}}\left\vert f^{\prime }\left( a\right) \right\vert ^{q\left(
\left( 2-t\right) /2\right) ^{s}}dt\right) ^{\frac{1}{q}}.  \label{2-23}
\end{eqnarray}%
From (\ref{2-23}) and (\ref{2-19}) to (\ref{2-22}), (\ref{2-18b}) holds.
\end{proof}

If taking $s=1$ in Theorem \ref{2.3}, we can derive the following corollary.

\begin{corollary}
Let $f:I\subseteq \mathbb{R}_{+}\mathbb{\rightarrow R}_{+}$ be
differentiable on $I^{\circ }$, and $a,b\in I^{\circ }$ with $a<b$ and $%
f^{\prime }\in L\left[ a,b\right] .$ If $\left\vert f^{\prime }\right\vert
^{q}$ is geometrically convex on $\left[ a,b\right] $ for $q>1,$ then%
\begin{equation*}
\left\vert f(a)f(b)-\frac{1}{\ln b-\ln a}\dint\limits_{a}^{b}\frac{f(x)}{x}%
f\left( \frac{ab}{x}\right) dx\right\vert \leq \frac{1}{2}\ln \left( \frac{b%
}{a}\right) \left( \frac{q-1}{2q-1}\right) ^{1-\frac{1}{q}}H_{3}\left(
1,q;h_{3}(\theta ),h_{3}(\vartheta )\right) ,
\end{equation*}%
\begin{equation*}
\left\vert f^{2}\left( \sqrt{ab}\right) -\frac{1}{\ln b-\ln a}%
\dint\limits_{a}^{b}\frac{f(x)}{x}f\left( \frac{ab}{x}\right) dx\right\vert
\leq \frac{1}{2}\ln \left( \frac{b}{a}\right) \left( \frac{q-1}{2q-1}\right)
^{1-\frac{1}{q}}H_{3}\left( 1,q;h_{3}(\theta ),h_{3}(\vartheta )\right) ,
\end{equation*}%
where $\theta ,\ \vartheta $, $H_{3}$ and $h_{3}$ are the same as in Theorem %
\ref{2.3}.
\end{corollary}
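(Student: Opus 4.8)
The plan is to derive this corollary directly from Theorem \ref{2.3} by specializing to the value $s=1$. The key preliminary remark is that geometric convexity coincides with $1$-geometric convexity: setting $s=1$ in the defining inequality of an $s$-geometrically convex function, namely $f\left(x^{\lambda}y^{1-\lambda}\right)\leq f(x)^{\lambda^{s}}f(y)^{(1-\lambda)^{s}}$, reduces it to $f\left(x^{\lambda}y^{1-\lambda}\right)\leq f(x)^{\lambda}f(y)^{1-\lambda}$, which is exactly the definition of a geometrically convex function. Consequently the hypothesis that $|f^{\prime}|^{q}$ is geometrically convex on $[a,b]$ is precisely the hypothesis of Theorem \ref{2.3} with $s=1$, and the remaining assumptions on $f$, $a$, $b$ and $q$ are identical.

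With this identification in hand, I would simply insert $s=1$ into the two estimates (\ref{2-18a}) and (\ref{2-18b}). The prefactor $\frac{\ln(b/a)}{2}\left(\frac{q-1}{2q-1}\right)^{1-1/q}$ is independent of $s$ and is therefore unchanged, while $H_{3}\left(s,q;h_{3}(\theta),h_{3}(\vartheta)\right)$ becomes $H_{3}\left(1,q;h_{3}(\theta),h_{3}(\vartheta)\right)$; the auxiliary function $h_{3}$ and the parameters $\theta,\vartheta$ of (\ref{2-6}) carry over verbatim with $s=1$. This yields exactly the two displayed inequalities of the corollary.

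The only point worth a comment is that at $s=1$ the two auxiliary bounds in (\ref{2-a}), namely $\mu^{t^{s}}\leq\mu^{ts}$ and $\eta^{t^{s}}\leq\eta^{ts+1-s}$, both become equalities, since $t^{1}=t$. As a result the four branches defining $H_{3}$ collapse to a single common expression and the case distinction on the positions of $|f^{\prime}(a)|$ and $|f^{\prime}(b)|$ relative to $1$ becomes vacuous. There is thus no genuine obstacle in the proof: it is a direct specialization of Theorem \ref{2.3}, requiring no new integration or estimate beyond those already carried out there.
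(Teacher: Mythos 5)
Your proposal is correct and matches the paper exactly: the paper gives no separate proof, stating only that the corollary follows by taking $s=1$ in Theorem \ref{2.3}, which is precisely your specialization argument. Your additional observation that the four branches of $H_{3}$ collapse to the single expression $b\left\vert f^{\prime }(b)\right\vert M_{1}h_{3}^{1/q}\left( \theta \right) +a\left\vert f^{\prime }(a)\right\vert M_{2}h_{3}^{1/q}\left( \vartheta \right)$ at $s=1$ is a valid refinement the paper does not spell out, but it does not change the substance of the argument.
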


\section{Application to Special Means}

Let us recall the following special means of two nonnegative number $a,b$
with $b>a:$

\begin{enumerate}
\item The arithmetic mean%
\begin{equation*}
A=A\left( a,b\right) :=\frac{a+b}{2}.
\end{equation*}

\item The geometric mean%
\begin{equation*}
G=G\left( a,b\right) :=\frac{a+b}{2}.
\end{equation*}

\item The logarithmic mean%
\begin{equation*}
L=L\left( a,b\right) :=\frac{b-a}{\ln b-\ln a}.
\end{equation*}

\item The p-logarithmic mean%
\begin{equation*}
L_{p}=L_{p}\left( a,b\right) :=\left( \frac{b^{p+1}-a^{p+1}}{(p+1)(b-a)}%
\right) ^{\frac{1}{p}},\ \ p\in 
\mathbb{R}
\backslash \left\{ -1,0\right\} .
\end{equation*}
\end{enumerate}

Let $f(x)=\left( x^{s}/s\right) +1,\ x\in \left( 0,1\right] ,\ 0<s<1,\ q\geq
1,$ and then the function $\left\vert f^{\prime }(x)\right\vert
^{q}=x^{(s-1)q}$ is $s$-geometrically convex on $\left( 0,1\right] $ for $%
0<s<1$ (see \cite{ZJQ12}).

\begin{proposition}
\label{3.1}Let $0<a<b\leq 1,\ 0<s<1,$and $q\geq 1.$ Then 
\begin{eqnarray*}
&&\left\vert G^{2}\left( \frac{a^{s}}{s}+1,\frac{b^{s}}{s}+1\right) -\frac{2%
}{s^{2}}A\left( G^{2}\left( a^{s},b^{s}\right) ,s^{2}\right) -\frac{2}{s}%
L_{s-1}^{s-1}\left( a,b\right) L\left( a,b\right) \right\vert \\
&\leq &\frac{1}{G^{(s-1)^{2}}(a,b)}\left( \frac{1}{\left( s^{2}-s+1\right) q}%
\right) ^{\frac{1}{q}}\left( \frac{b-a}{4L\left( a,b\right) }\right) ^{1-%
\frac{1}{q}} \\
&&\times \left[ M_{1}G\left( a^{-(s-1)^{2}},b^{s}\right) \left\{ b^{\frac{%
\left( s^{2}-s+1\right) q}{2}}-L(a^{\frac{\left( s^{2}-s+1\right) q}{2}},b^{%
\frac{\left( s^{2}-s+1\right) q}{2}})\right\} ^{\frac{1}{q}}\right. \\
&&\left. M_{2}G\left( b^{-(s-1)^{2}},a^{s}\right) \left\{ L(a^{\frac{\left(
s^{2}-s+1\right) q}{2}},b^{\frac{\left( s^{2}-s+1\right) q}{2}})-a^{\frac{%
\left( s^{2}-s+1\right) q}{2}}\right\} ^{\frac{1}{q}}\right] ,
\end{eqnarray*}%
\begin{eqnarray*}
&&\left\vert \left( \frac{G^{s}\left( a,b\right) }{s}+1\right) ^{2}-\frac{2}{%
s^{s}}A\left( G^{2}\left( a^{s},b^{s}\right) ,s^{2}\right) -\frac{2}{s}%
L_{s-1}^{s-1}\left( a,b\right) L\left( a,b\right) \right\vert \\
&\leq &\frac{1}{G^{(s-1)^{2}}(a,b)}\left( \frac{1}{\left( s^{2}-s+1\right) q}%
\right) ^{\frac{1}{q}}\left( \frac{b-a}{4L\left( a,b\right) }\right) ^{1-%
\frac{1}{q}} \\
&&\times \left[ M_{1}G\left( a^{-(s-1)^{2}},b^{s}\right) \left\{ L(a^{\frac{%
\left( s^{2}-s+1\right) q}{2}},b^{\frac{\left( s^{2}-s+1\right) q}{2}})-a^{%
\frac{\left( s^{2}-s+1\right) q}{2}}\right\} ^{\frac{1}{q}}\right. \\
&&\left. M_{2}G\left( b^{-(s-1)^{2}},a^{s}\right) \right] \left\{ b^{\frac{%
\left( s^{2}-s+1\right) q}{2}}-L(a^{\frac{\left( s^{2}-s+1\right) q}{2}},b^{%
\frac{\left( s^{2}-s+1\right) q}{2}})\right\} ^{\frac{1}{q}}.
\end{eqnarray*}%
where \bigskip $M_{1}=\left( \sqrt{ab}^{s}/s\right) +1$ and $\ M_{2}=\left(
b^{s}/s\right) +1.$
\end{proposition}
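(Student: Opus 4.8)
The plan is to specialize Theorem \ref{2.2} to the concrete function $f(x)=x^{s}/s+1$ on $[a,b]\subset(0,1]$ and then translate both sides into the special-means notation. First I would record the elementary data: $f^{\prime}(x)=x^{s-1}$, so $\left\vert f^{\prime}(x)\right\vert^{q}=x^{(s-1)q}$ is $s$-geometrically convex on $(0,1]$ by \cite{ZJQ12}; moreover $f$ is increasing, so $M_{1}=f(\sqrt{ab})=(\sqrt{ab})^{s}/s+1$ and $M_{2}=f(b)=b^{s}/s+1$, exactly as stated. Crucially, since $0<a\leq b\leq1$ and $s-1<0$, one has $\left\vert f^{\prime}(a)\right\vert=a^{s-1}\geq1$ and $\left\vert f^{\prime}(b)\right\vert=b^{s-1}\geq1$, so throughout only case (ii) of $H_{1},H_{2}$ is active; and since $a<b$ the arguments $\theta,\vartheta$ never equal $1$, so the $u\neq1$ branches of $h_{1},h_{2}$ apply.

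For the left-hand sides I would expand $\frac{f(x)}{x}f\!\left(\frac{ab}{x}\right)=\frac1x\!\left(\frac{(ab)^{s}}{s^{2}}+1\right)+\frac{x^{s-1}}{s}+\frac{(ab)^{s}}{s}x^{-s-1}$ and integrate term by term over $[a,b]$, using $\int_{a}^{b}x^{-1}dx=\ln(b/a)$, $\int_{a}^{b}x^{s-1}dx=(b^{s}-a^{s})/s$, $\int_{a}^{b}x^{-s-1}dx=(a^{-s}-b^{-s})/s$, together with the cancellation $(ab)^{s}(a^{-s}-b^{-s})=b^{s}-a^{s}$. Dividing by $\ln(b/a)$ yields $\frac{(ab)^{s}}{s^{2}}+1+\frac{2(b^{s}-a^{s})}{s^{2}\ln(b/a)}$; here I recognize $\frac{(ab)^{s}}{s^{2}}+1=\frac{2}{s^{2}}A\!\left(G^{2}(a^{s},b^{s}),s^{2}\right)$, and via $L_{s-1}^{s-1}(a,b)=\frac{b^{s}-a^{s}}{s(b-a)}$ and $L(a,b)=\frac{b-a}{\ln(b/a)}$ the identity $\frac{2(b^{s}-a^{s})}{s^{2}\ln(b/a)}=\frac{2}{s}L_{s-1}^{s-1}(a,b)L(a,b)$. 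Since $f(a)f(b)=G^{2}\!\left(\frac{a^{s}}{s}+1,\frac{b^{s}}{s}+1\right)$ and $f^{2}(\sqrt{ab})=\left(\frac{G^{s}(a,b)}{s}+1\right)^{2}$, both left-hand expressions in the proposition follow.

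The heart of the right-hand side is a closed form for the auxiliary functions. Writing $\rho=(s^{2}-s+1)q/2$ and $\ell=\ln(b/a)$, the relations $a\left\vert f^{\prime}(a)\right\vert^{s}=a^{s^{2}-s+1}$ and $b\left\vert f^{\prime}(b)\right\vert^{s}=b^{s^{2}-s+1}$ give $\theta=(a/b)^{\rho}$ and $\vartheta=(b/a)^{\rho}$. Using the integral representations $h_{1}(u)=\int_{0}^{1}(1-t)u^{t}dt$ and $h_{2}(u)=\int_{0}^{1}tu^{t}dt$ behind \eqref{2-5}, and recognizing $\frac{b^{\rho}-a^{\rho}}{\rho\ell}=L(a^{\rho},b^{\rho})$, I would establish $h_{1}(\theta)=\frac{b^{\rho}-L(a^{\rho},b^{\rho})}{\rho\ell\,b^{\rho}}$, $h_{1}(\vartheta)=\frac{L(a^{\rho},b^{\rho})-a^{\rho}}{\rho\ell\,a^{\rho}}$, and the mirror pair for $h_{2}$. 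Substituting $\left\vert f^{\prime}(a)\right\vert=a^{s-1}$, $\left\vert f^{\prime}(b)\right\vert=b^{s-1}$ into the case (ii) formula for $H_{1}$ (so that $b\left\vert f^{\prime}(b)\right\vert\left\vert f^{\prime}(a)\right\vert^{1-s}=b^{s}a^{-(s-1)^{2}}$, etc.) and taking $q$-th roots then reproduces the bracketed sum; the factors $b^{-\rho/q}=b^{-(s^{2}-s+1)/2}$ and $(\rho\ell)^{-1/q}$ peel off from $h_{i}^{1/q}$ and combine with the prefactor $\ln(b/a)(1/2)^{2-1/q}$ and the normalization $1/G^{(s-1)^{2}}(a,b)=(ab)^{-(s-1)^{2}/2}$ to give exactly $\left(\frac{1}{(s^{2}-s+1)q}\right)^{1/q}\!\left(\frac{b-a}{4L(a,b)}\right)^{1-1/q}$, after replacing $\ln(b/a)$ by $(b-a)/L(a,b)$.

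I expect the main obstacle to be organizational rather than conceptual: carefully tracking the powers of $a$ and $b$ as they pass through the products in $H_{1},H_{2}$, through the normalization $1/G^{(s-1)^{2}}(a,b)$, and through the constants extracted from $h_{i}^{1/q}$, and checking that the $M_{1}$- and $M_{2}$-branches acquire $\{b^{\rho}-L(a^{\rho},b^{\rho})\}^{1/q}$ and $\{L(a^{\rho},b^{\rho})-a^{\rho}\}^{1/q}$ in the order dictated by $h_{1}$ versus $h_{2}$. The single non-mechanical step is the logarithmic-mean identity for $h_{1}(\theta),h_{1}(\vartheta),h_{2}(\theta),h_{2}(\vartheta)$; once those four closed forms are in hand the remainder is substitution and collection of exponents, and the two displayed inequalities follow directly from \eqref{2-3} and \eqref{2-4}.
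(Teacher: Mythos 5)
Your proposal is correct and follows essentially the same route as the paper: the paper's proof simply observes that $\left\vert f^{\prime }(a)\right\vert =a^{s-1}\geq \left\vert f^{\prime }(b)\right\vert =b^{s-1}\geq 1$ (so case (ii) of $H_{1},H_{2}$ applies), records $M_{1},M_{2}$, and invokes Theorem \ref{2.2}. Your additional work---the term-by-term integration giving the special-means form of the left-hand sides, the closed forms $h_{1}(\theta),h_{1}(\vartheta),h_{2}(\theta),h_{2}(\vartheta)$ via the logarithmic mean, and the constant bookkeeping matching $\left( \frac{1}{(s^{2}-s+1)q}\right) ^{1/q}\left( \frac{b-a}{4L(a,b)}\right) ^{1-1/q}$---is exactly the computation the paper leaves implicit, and it checks out.
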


\begin{proof}
Let $f(x)=\left( x^{s}/s\right) +1,\ x\in \left( 0,1\right] ,\ 0<s<1.$ Then $%
\left\vert f^{\prime }(a)\right\vert =a^{s-1}>b^{s-1}=\left\vert f^{\prime
}(b)\right\vert \geq 1,$ $M_{1}=\underset{x\in \left[ a,\sqrt{ab}\right] }{%
\max \left\vert f(x)\right\vert =}\left( \sqrt{ab}^{s}/s\right) +1,$ $\
M_{2}=\underset{x\in \left[ \sqrt{ab},b\right] }{\max \left\vert
f(x)\right\vert }=\left( b^{s}/s\right) +1.$ Thus, by Theorem \ref{2.2},
Proposition \ref{3.1} is proved.
\end{proof}

\begin{proposition}
\label{3.2}Let $0<a<b\leq 1,\ 0<s<1,$and $q>1.$ Then 
\begin{eqnarray*}
&&\left\vert G^{2}\left( \frac{a^{s}}{s}+1,\frac{b^{s}}{s}+1\right) -\frac{2%
}{s^{2}}A\left( G^{2}\left( a^{s},b^{s}\right) ,s^{2}\right) -\frac{2}{s}%
L_{s-1}^{s-1}\left( a,b\right) L\left( a,b\right) \right\vert \\
&\leq &\frac{b-a}{2L(a,b)}\left( \frac{q-1}{2q-1}\right) ^{1-\frac{1}{q}}%
\frac{L^{\frac{1}{q}}(a^{\frac{\left( s^{2}-s+1\right) q}{2}},b^{\frac{%
\left( s^{2}-s+1\right) q}{2}})}{G^{(s-1)^{2}}(a,b)} \\
&&\times \left\{ M_{1}G\left( a^{-(s-1)^{2}},b^{s}\right) +M_{2}G\left(
b^{-(s-1)^{2}},a^{s}\right) \right\}
\end{eqnarray*}%
\begin{eqnarray*}
&&\left\vert \left( \frac{G^{s}\left( a,b\right) }{s}+1\right) ^{2}-\frac{2}{%
s^{s}}A\left( G^{2}\left( a^{s},b^{s}\right) ,s^{2}\right) -\frac{2}{s}%
L_{s-1}^{s-1}\left( a,b\right) L\left( a,b\right) \right\vert \\
&\leq &\frac{b-a}{2L(a,b)}\left( \frac{q-1}{2q-1}\right) ^{1-\frac{1}{q}}%
\frac{L^{\frac{1}{q}}(a^{\frac{\left( s^{2}-s+1\right) q}{2}},b^{\frac{%
\left( s^{2}-s+1\right) q}{2}})}{G^{(s-1)^{2}}(a,b)} \\
&&\times \left\{ M_{1}G\left( a^{-(s-1)^{2}},b^{s}\right) +M_{2}G\left(
b^{-(s-1)^{2}},a^{s}\right) \right\}
\end{eqnarray*}%
where \bigskip $M_{1}=\left( \sqrt{ab}^{s}/s\right) +1$ and $\ M_{2}=\left(
b^{s}/s\right) +1.$
\end{proposition}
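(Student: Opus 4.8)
The plan is to specialize Theorem \ref{2.3} to the function $f(x)=\left( x^{s}/s\right) +1$ on $\left( 0,1\right] $, exactly as in Proposition \ref{3.1} but invoking the H\"{o}lder-type estimate (\ref{2-18a})--(\ref{2-18b}) in place of the power-mean estimate. First I would record the elementary facts about $f$: since $f^{\prime }(x)=x^{s-1}$ we have $\left\vert f^{\prime }(x)\right\vert ^{q}=x^{(s-1)q}$, which is $s$-geometrically convex on $\left( 0,1\right] $ by \cite{ZJQ12}, so the hypotheses of Theorem \ref{2.3} are met. Because $0<a<b\leq 1$ and $0<s<1$, the map $x\mapsto x^{s-1}$ is decreasing and at least $1$ on $\left( 0,1\right] $, whence $\left\vert f^{\prime }(a)\right\vert =a^{s-1}>b^{s-1}=\left\vert f^{\prime }(b)\right\vert \geq 1$; thus we land in case (ii) of Theorem \ref{2.3}. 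Moreover $f$ is increasing, so $M_{1}=f\left( \sqrt{ab}\right) =\left( \sqrt{ab}^{\,s}/s\right) +1$ and $M_{2}=f(b)=\left( b^{s}/s\right) +1$, matching the values asserted in the statement.

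The substantive work then splits into two separate simplifications. For the left-hand sides I would expand the kernel $\frac{f(x)}{x}f\!\left( \frac{ab}{x}\right) $ and integrate term by term: the constant and product terms contribute $\left( \frac{(ab)^{s}}{s^{2}}+1\right) \left( \ln b-\ln a\right) $, while the two cross terms each integrate to $\left( b^{s}-a^{s}\right) /s^{2}$. Dividing by $\ln b-\ln a$ and rewriting $(ab)^{s}=G^{2}(a^{s},b^{s})$, $\frac{b^{s}-a^{s}}{s(b-a)}=L_{s-1}^{s-1}(a,b)$, $\frac{b-a}{\ln b-\ln a}=L(a,b)$ reproduces the $A$- and $L$-mean expressions, and the end-point values $f(a)f(b)$ and $f^{2}(\sqrt{ab})$ become $G^{2}\!\left( \tfrac{a^{s}}{s}+1,\tfrac{b^{s}}{s}+1\right) $ and $\left( \tfrac{G^{s}(a,b)}{s}+1\right) ^{2}$ respectively.

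For the right-hand side I would evaluate $\theta $ and $\vartheta $ from (\ref{2-6}): with $a\left\vert f^{\prime }(a)\right\vert ^{s}=a^{s^{2}-s+1}$ one gets $\theta =(a/b)^{\alpha }$ and $\vartheta =(b/a)^{\alpha }$ where $\alpha =(s^{2}-s+1)q/2$. Substituting into $h_{3}(u)=\frac{u-1}{\ln u}$ collapses the two factors to $h_{3}(\theta )=L(a^{\alpha },b^{\alpha })/b^{\alpha }$ and $h_{3}(\vartheta )=L(a^{\alpha },b^{\alpha })/a^{\alpha }$, so that $h_{3}^{1/q}$ produces the common factor $L^{1/q}(a^{\alpha },b^{\alpha })$ together with the residual powers $b^{-(s^{2}-s+1)/2}$ and $a^{-(s^{2}-s+1)/2}$. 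Combining these with the case-(ii) coefficients $b\left\vert f^{\prime }(b)\right\vert \left\vert f^{\prime }(a)\right\vert ^{1-s}=b^{s}a^{-(s-1)^{2}}$ and $a\left\vert f^{\prime }(a)\right\vert \left\vert f^{\prime }(b)\right\vert ^{1-s}=a^{s}b^{-(s-1)^{2}}$, and using $G^{(s-1)^{2}}(a,b)=(ab)^{(s-1)^{2}/2}$, folds each product into a geometric mean, giving the bracketed term $\{M_{1}G(a^{-(s-1)^{2}},b^{s})+M_{2}G(b^{-(s-1)^{2}},a^{s})\}$ divided by $G^{(s-1)^{2}}(a,b)$, while $\frac{\ln (b/a)}{2}=\frac{b-a}{2L(a,b)}$ supplies the leading factor. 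The main obstacle is purely bookkeeping: one must keep the exponents of $a$ and $b$ aligned through these substitutions, and observe that since $h_{3}$ carries no $t$- or $(1-t)$-weight, the estimates (\ref{2-18a}) and (\ref{2-18b}) share the identical right-hand side $H_{3}$, which is exactly why both displayed inequalities of the Proposition have the same bound.
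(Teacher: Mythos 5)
Your proposal is correct and takes essentially the same route as the paper: the paper's own proof merely observes that $\left\vert f^{\prime }(a)\right\vert =a^{s-1}>b^{s-1}=\left\vert f^{\prime }(b)\right\vert \geq 1$ (so case (ii) applies), identifies $M_{1}=\left( \sqrt{ab}^{\,s}/s\right) +1$ and $M_{2}=\left( b^{s}/s\right) +1$, and invokes Theorem \ref{2.3}. Your write-up additionally carries out the simplifications the paper leaves implicit (expanding the integral into the $A$-, $L$- and $L_{s-1}$-mean terms, computing $\theta =(a/b)^{\alpha }$, $\vartheta =(b/a)^{\alpha }$ with $\alpha =(s^{2}-s+1)q/2$, and folding $h_{3}^{1/q}$ into $L^{1/q}(a^{\alpha },b^{\alpha })$ with the correct residual exponents), and these computations are accurate.
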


\begin{proof}
Let $f(x)=\left( x^{s}/s\right) +1,\ x\in \left( 0,1\right] ,\ 0<s<1.$ Then $%
\left\vert f^{\prime }(a)\right\vert =a^{s-1}>b^{s-1}=\left\vert f^{\prime
}(b)\right\vert \geq 1,$ $M_{1}=\underset{x\in \left[ a,\sqrt{ab}\right] }{%
\max \left\vert f(x)\right\vert =}\left( \sqrt{ab}^{s}/s\right) +1,$ $\
M_{2}=\underset{x\in \left[ \sqrt{ab},b\right] }{\max \left\vert
f(x)\right\vert }=\left( b^{s}/s\right) +1.$ Thus, by Theorem \ref{2.3},
Proposition \ref{3.2} is proved.
\end{proof}

\end{document}